\newtheorem{thm}{Theorem}[section] \newtheorem{lemma}[thm]{Lemma}
\newtheorem{theorem}{Theorem}[section]
\newtheorem*{thm*}{Theorem}
\theoremstyle{definition} 
 \newtheorem{definition}[thm]{Definition}
\newtheorem*{remark}{Remark} \newtheorem*{acknowledgements}{Acknowledgments}
\newtheorem{problem}{Problem}
\newtheorem*{theorem*}{Theorem}
\newtheorem*{remark*}{Remark}
\newtheorem{my-claim}{Claim}
\newcommand{\OO}{\mathscr{O}}
\newcommand{\EExt}{\mathscr{E}xt}
\newcommand{\sL}{\mathscr{L}}
\newcommand{\sM}{\mathscr{M}}
\newcommand{\cH}{\mathcal{H}}
\newcommand{\cE}{\mathcal{E}}
\newcommand{\cL}{\mathcal{L}}
\newcommand{\cF}{\mathcal{F}}
\newcommand{\cT}{\mathcal{T}}
\newcommand{\cN}{\mathcal{N}}
\newcommand{\cS}{\mathcal{S}}
\newcommand{\cI}{\mathcal{I}}
\newcommand{\cP}{\mathcal{P}}
\DeclareMathOperator{\id}{{\sf id}}
\DeclareMathOperator{\GL}{{\sf GL}}
\DeclareMathOperator{\rD}{D}
\DeclareMathOperator{\rk}{rk}
\DeclareMathOperator{\Ext}{Ext} 
\DeclareMathOperator{\Sym}{Sym}
\DeclareMathOperator{\Hom}{Hom} 
\DeclareMathOperator{\im}{Im} \DeclareMathOperator{\cok}{Cok}
\DeclareMathOperator{\HH}{H} \DeclareMathOperator{\hh}{h}
\DeclareMathOperator{\Pic}{Pic}
\DeclareMathOperator{\Pf}{Pf}
\newcommand{\tra}{{}^{\mathrm{t}}}
\newcommand{\Z}{\mathbb Z} 
 \newcommand{\p}{\mathbf P}
\newcommand{\ppp}{\mathbb P}
 \newcommand{\G}{\mathbf G}
\DeclareMathOperator{\ts}{\otimes}
\newcommand{\xr}{\xrightarrow}
\newcommand{\kk}{\mathbf{k}}
\numberwithin{equation}{section}
\begin{document}


\title{Skew-symmetric matrices and Palatini scrolls}
\thanks{The first named author was partially supported by {\sc GRIFGA},
  MIUR, and ANR contract Interlow ANR-09-JCJC-0097-0. The second named author was partially supported by MIUR funds.}

\author{Daniele Faenzi}
\address{Universit\'e de Pau et des Pays de l'Adour, Avenue de l'Universit\'e - BP 576 - 64012 PAU Cedex - France,}
\email{daniele.faenzi@univ-pau.fr}
\author{Maria Lucia Fania}
\address{Universit\`{a} degli Studi dell'Aquila,
Via Vetoio Loc. Coppito\\ 67100 L'Aquila, Italy,} \email{fania@univaq.it}


\subjclass[2000]{14E05, 14M12, 14N15, 14J40}

 \keywords{Scrolls, ACM sheaves, pfaffian hypersurfaces,
  skew-symmetric forms, Hilbert scheme}

\maketitle

\begin{abstract}
 We prove that, for $m$ greater than $3$ and $k$ greater than $m-2$,
 the Grassmannian of $m$-dimensional subspaces of the space of skew-symmetric forms over a vector 
 space of dimension $2k$ is birational to the Hilbert scheme of Palatini scrolls in $\p^{2k-1}$.
 
 For $m=3$ and $k > 3$, this Grassmannian 
 is proved to be birational to the set of pairs $(\cE,Y)$, where $Y$ is a smooth
 plane curve of degree $k$ and $\cE$ is a stable rank-$2$ bundle on $Y$
 whose determinant is $\OO_Y(k-1)$.
\end{abstract}


\section{Introduction}

Let $\kk$ be a field, $n \geq 3$ be an integer, and $V$ be a
$\kk$-vector space of dimension $n+1$.
Degeneracy loci of general morphisms $\phi:
\OO^{m}_{\p(V)} \to \Omega_{\p(V)}(2)$, have been considered by
several authors, for example M. C. Chang \cite{chang} and G. Ottaviani
\cite{ottaviani:scrolls}. 
Relying on a nice interpretation due to Ottaviani, we identify the
global sections of $\Omega_{\p(V)}(2)$ with $\wedge^{2} V^{*}$, the
space of skew-symmetric forms on $V$, or of skew-symmetric matrices of
size $n+1$ with
coefficients in the base field $\kk$. 
Thus a morphism $\phi: \OO^{m}_{\p(V)} \to \Omega_{\p(V)}(2)$ can be
written in coordinates by means of $m$ skew-symmetric
matrices $A_1, \ldots, A_m$.

In classical terminology, each matrix $A_i$ corresponds to a 
linear line complex $\Gamma_{i}$ of $\p(V)$, so that the degeneracy locus $X_\phi$
of $\phi$, can be thought of as the set of centers of complexes belonging
to the linear system spanned by $\Gamma_{1}, \ldots, \Gamma_{m}$.
Under this point of view, these varieties were considered already by classical
algebraic geometers.
For instance, in $1891$ G. Castelnuovo in \cite{castelnuovo} considered the
case $m=3$ and $n=4$, namely the case of nets of linear
complexes in $\p^4$. The locus of lines which are centers of the linear
complexes belonging to a general net of linear complexes in $\p^4$,
or, in modern language, the degeneracy locus of a general morphism
$\phi: \OO^{3}_{\p^4} \to \Omega_{\p^4}(2)$, is the projected Veronese
surface in $\p^4$.

In $1901$ F. Palatini in \cite{palatini:cinque} and \cite{palatini:complessi} considered the case of linear systems of linear
complexes in $\p^5$ of dimension $> 1$.  The case $m=3$ leads to the
elliptic scroll surface of degree $6$, which was further studied by
G. Fano \cite{fano:complessi-lineari}. The case $m=4$ gives a $3$-fold
of degree $7$ which is a scroll over a cubic surface of $\p^3$, also
called Palatini scroll.  

In the present paper we will consider the Hilbert scheme $\cH_m(V)$ of
these degeneracy loci.
Given a general map $\phi: \OO^{m}_{\p(V)} \to \Omega_{\p(V)}(2)$, 
we consider $\phi$ as the point $[\phi]$ of the Grassmann variety
$\G(m,\wedge^{2} V^{*})$ parametrizing $m$-dimensional vector
subspaces of $\wedge^{2} V^{*}$.
Since a linear change of coordinate on $U$ does not change the degeneracy locus $X_\phi$,
we have a rational map:
\begin{align*} \rho : \G(m,\wedge^{2} V^{*}) & \dashrightarrow
  \cH_m(V) \\
  [\phi] & \mapsto X_{\phi},
\end{align*}
where $\cH_m(V)$ denotes the union of components of the Hilbert scheme
of $\p(V)$ containing the degeneracy locus of a general map of the
form $\phi$.

As an instance of classical results in this direction, 
let us mention that, if $m=3$ and $n=4$, 
from the results contained in \cite{castelnuovo}, one can prove that
the component of $\cH_m(V)$ containing 
Veronese surfaces in $\p^4$ is birational to $\G(3,\wedge^{2} V^{*})$.
A similar statement holds for the Palatini scroll in $\p^5$:
the main result of \cite{fania-mezzetti} states that $\rho$ is a birational map if
$m=4$, $n=5$.
On the other hand, it was proved in \cite{bazan-mezzetti}, and in fact classically known
to Fano (see \cite{fano:complessi-lineari}), that the map $\rho$ is
generically $4:1$ in case $m=3$, $n=5$.

We focus here on the case when $n$ is odd (say $n=2k-1$), in
which case we call a scroll of the form $X_\phi$ a {\it Palatini scroll}, including also the case of surface scrolls over a curve.
This last case is particularly well-studied, in the framework of surface
scrolls in $\p(V)$ which are non-special.
In particular we mention the results on the 
Hilbert schemes of non-special scrolls due to
\cite{calabri-ciliberto-flamini-miranda:non-special}.

In the present paper we
generalise the result of \cite{fania-mezzetti}
by proving that it holds for all $m\geq 4$ and all $k\geq m-1$.
Moreover, we prove that, for $m=3$, the map $\rho$ is generically injective for
all $k\geq 4$. In other words, we show that the case studied in
\cite{bazan-mezzetti} is the only exception to injectivity of $\rho$.
More precisely, we prove:

\begin{theorem*} Let $m,k$ be integers and let $\rho$ be the map
introduced above.
  \begin{enumerate}[A)]
  \item The map $\rho$ is birational for all $m\geq 4$, $k\geq
    m-1$, in particular the Hilbert scheme $\cH_m(V)$ is generically
    smooth of dimension $m(k(2k-1)-m)$.
  \item For $m=3$ and for all $k\geq 4$, the map $\rho$ is
    generically injective. Moreover, it is dominant on the closed subscheme of
    $\cH_3(V)$ whose general element is a general plane curve $Y$ of degree
    $k$ equipped with a general stable rank-$2$ bundle whose determinant
    is $\OO_Y(k-1)$.
  \end{enumerate}
\end{theorem*}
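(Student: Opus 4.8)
The plan is to analyze the map $\rho$ by understanding its fibers and differential separately in the two regimes. The key geometric object is the degeneracy locus $X_\phi$ together with its natural sheaf $\cE_\phi = \cok(\phi^\vee)$ (or an appropriate twist of the cokernel of $\phi$), which is supported on $X_\phi$ and should be a sheaf of rank $1$ on $X_\phi$ for general $\phi$. The first step is to establish the dimension and smoothness of $X_\phi$ (for $m \le k$, general $\phi$), so that $X_\phi$ is a smooth scroll of the expected dimension $n - m = 2k-1-m$; this follows from a Bertini-type argument applied to the vector bundle map, together with the classical structure of Palatini scrolls. Simultaneously one shows that $\cE_\phi$ is a line bundle on $X_\phi$ recovering the ruling.

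**Reconstructing $\phi$ from $X_\phi$.** The heart of part A) is to show $\rho$ is generically injective, i.e. that from a general scroll $X = X_\phi$ one can recover the subspace $\langle A_1,\dots,A_m\rangle \subset \wedge^2 V^*$ up to $\GL_m$. The approach is sheaf-theoretic: the morphism $\phi$ fits in an exact sequence
\begin{equation*}
  0 \to \OO_{\p(V)}^m \xr{\phi} \Omega_{\p(V)}(2) \to \cE_\phi \to 0,
\end{equation*}
so the $m$-dimensional space $U$ of sections is determined as $\HH^0(\p(V), \ker(\Omega_{\p(V)}(2) \to \cE_\phi))$, provided the natural map $U \to \HH^0(\p(V),\Omega_{\p(V)}(2)) = \wedge^2 V^*$ is injective with image recoverable from the cokernel sheaf alone. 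Concretely, one needs: (i) $\cE_\phi$ is (up to twist) an ACM sheaf on $X$ determining $X$ and its embedding, and the kernel sheaf $\sK := \ker(\Omega(2)\to\cE_\phi)$ has $\hh^0(\sK) = m$, $\hh^0(\sK(-1)) = 0$; and (ii) any two general $\phi,\phi'$ with $X_\phi = X_{\phi'}$ and $\cE_\phi \cong \cE_{\phi'}$ have the same kernel sheaf, hence the same $U$. Step (i) is a cohomology computation using the Koszul-type resolution of $\cE_\phi$ and the known cohomology of twists of $\Omega_{\p(V)}$; step (ii) uses that $\cE_\phi$ as an abstract sheaf together with its embedding determines the surjection $\Omega(2) \to \cE_\phi$ up to scalar because $\Hom(\Omega(2), \cE_\phi)$ is one-dimensional (or at least the relevant surjections form a single orbit). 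This shows $\rho$ is injective on a dense open set.

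**Dominance and dimension count.** For surjectivity onto a generically smooth component, one computes $\dim \G(m,\wedge^2 V^*) = m(k(2k-1) - m)$ directly, and shows this equals $\hh^0(X, N_{X/\p(V)})$ for general $X = X_\phi$, while $\HH^1(X, N_{X/\p(V)}) = 0$ so that $\cH_m(V)$ is smooth of that dimension at $[X]$. The normal bundle computation proceeds from the two exact sequences relating $N_{X/\p(V)}$, the normal bundle of the Grassmannian point, and $\cE_\phi$; one expects $h^0(N_{X/\p(V)})$ to be computed via the conormal sequence and the identification of $\Ext$-groups of $\cE_\phi$ with ambient cohomology. Combined with injectivity and the equality of dimensions, $\rho$ is birational onto a component. \emph{The main obstacle} I anticipate is the vanishing $\HH^1(X, N_{X/\p(V)}) = 0$ and the matching cohomology computations for the kernel sheaf: these require controlling the cohomology of $\cE_\phi$ and its twists on the scroll $X$ (which is not a complete intersection), and the argument must be arranged so that the bound $k \ge m-1$ appears naturally as the threshold where the relevant $\Ext^1$ or $\HH^1$ first vanishes.

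**The case $m=3$.** For part B), when $m = 3$ the degeneracy locus $X_\phi$ is a surface scroll; the new feature is that the base of the scroll is a plane curve. The plan is: three skew-symmetric forms $A_1,A_2,A_3$ on a $2k$-dimensional space give, via $t_1 A_1 + t_2 A_2 + t_3 A_3$, a family of skew forms parametrized by $\p^2$, and the locus where this form is degenerate (rank $< 2k$) is the Pfaffian hypersurface $Y = \{\Pf(t_1 A_1 + t_2 A_2 + t_3 A_3) = 0\} \subset \p^2$, a plane curve of degree $k$ (smooth for general choice). Over $Y$ the kernel of the degenerate form is $2$-dimensional (again for general choice, the corank is exactly $2$ on all of $Y$), giving a rank-$2$ bundle $\cE$ on $Y$; a standard determinant computation with the Pfaffian gives $\det \cE \cong \OO_Y(k-1)$, and a stability argument (general position) shows $\cE$ is stable. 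Conversely, given $(Y,\cE)$ one recovers the net of forms as the image of $\HH^0(\p^2, \text{(appropriate sheaf)})$ — essentially $Y$ is the support of $\cok(\phi)$ and the Beilinson/Hilbert-Burch type resolution reconstructs $\phi$. Generic injectivity then reduces to showing that $(Y,\cE)$ determines the net up to $\GL_3$, which again is an $\Ext$-computation: $\Hom$ from the relevant bundle on $\p^2$ to the pushforward of $\cE$ should be $3$-dimensional. Dominance onto the indicated subscheme of $\cH_3(V)$ follows by a parameter count comparing $\dim \G(3,\wedge^2 V^*)$ with $\dim$ of the moduli of pairs $(Y,\cE)$ plus the dimension of the family of embeddings; the threshold $k \ge 4$ is where $Y$ becomes non-rational and the bundle moduli have the expected dimension.
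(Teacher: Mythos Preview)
Your proposal has the right overall architecture (reconstruct $\phi$ from the cokernel line bundle, then match dimensions), but there are two genuine gaps.

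\textbf{First gap: recovering the cokernel from $X$ alone.} Your step (ii) assumes that if $X_\phi = X_{\phi'}$ then $\cE_\phi \cong \cE_{\phi'}$, and you then argue that the kernel sheaf is determined. But you never justify the isomorphism $\cE_\phi \cong \cE_{\phi'}$ from $X_\phi = X_{\phi'}$; this is the heart of the injectivity argument and is not automatic. The cokernel $\sL_\phi = \cok({}^t\phi)$ is a line bundle on $X$, and a priori a different presentation $\phi'$ could give a different line bundle. The paper's Lemma~3.2 proves that $\sL_\phi \cong q_*p^*(\OO_Y(1))$, i.e.\ it is the pullback of the hyperplane bundle from the base $Y$ of the scroll, hence intrinsic to $X$. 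This requires a Chern-class computation showing $(2k-m-1)\,c_1(\sL_\phi) = (2k-m-1)\,p^*H_Y$, and then a separate argument to kill torsion in $\Pic(Y)$: Grothendieck--Lefschetz for $m\ge 5$, torsion-freeness of $\mathrm{coker}(\Pic(\p(U))\to\Pic(Y))$ for $m=4$, and for $m=3$ Castelnuovo's classical theorem that a $g^s_k$ with $s\ge 2$ on a smooth plane curve of degree $k\ge 4$ must be the hyperplane series. The last point is precisely why the bound $k\ge 4$ appears in part B), not the non-rationality of $Y$ as you suggest.

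\textbf{Second gap: $\HH^1$ of the normal bundle does not vanish.} Your plan for dominance is to show $\HH^1(X,\cN_{X/\p(V)})=0$ and conclude smoothness of $\cH_m(V)$. This fails: the paper computes (Theorem~4.1) that for $m=4$ and $k\ge 5$ one has
\[
\hh^1(X,\cN) = \tfrac{2(k-2)(k-3)(k-4)}{3} \neq 0.
\]
So the unobstructedness route is blocked. The paper instead proves birationality by combining the generic injectivity of $\rho$ (from the first point) with the direct computation $\hh^0(X,\cN) = m(k(2k-1)-m) = \dim\G(m,\wedge^2 V^*)$; equality of dimensions plus injectivity forces $\rho$ to be dominant, hence birational, and generic smoothness of $\cH_m(V)$ follows from this rather than from vanishing of $\HH^1$. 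The $\hh^0(\cN)$ computation is itself nontrivial: it is carried out by pushing the normal-bundle sequence down to the pfaffian base $Y$ and invoking a lemma of Mohan Kumar--Rao--Ravindra on the exterior square of $M_\phi$.
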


\begin{remark*}
  For $m\geq 4$ the general element $X$ of $\cH_m(V)$ is a smooth
  subvariety $X$ of $\p(V)$, whose normal bundle $\cN$ satisfies
  $\dim \HH^0(X,\cN)=m(k(2k-1)-m)$. For $m = 4$, $k\geq 5$, we have
  $\HH^1(X,N)\neq 0$, nevertheless this space gives no obstructions to
  $\cH_m(V)$.

  For $m=3$, the general element of $\cH_m(V)$ is a scroll of the form
  $\ppp(\cE)$, where $\cE$ is a stable bundle of rank $2$ and degree
  $k(k-1)$ over a smooth curve of genus ${k-1 \choose 2}$.
  In other words, the curve is not planar in general, and  only the
  degree of $\cE$ is fixed, not its determinant.
\end{remark*}



\section{Basic constructions}

In this section we review some classical constructions related to
Palatini scrolls. All the material in this section is essentially well-known.

Let $\kk$ be a field.
Given a $\kk$-vector space $A$ of finite dimension, we set $\p(A)$ for
the projective space of
$1$-dimensional subspaces of $A$. We use the notation $\G(a,A)$ for
the Grassmannian of $a$-dimensional vector subspaces of $A$. 
Given a torsion-free coherent sheaf  $\cE$ on an integral scheme $Y$,
let $\cS=\Sym(\cE)$ be the symmetric algebra of $\cE$.  With $\ppp(\cE)$,
the associated  projective space bundle,  we mean $\ppp(\cE):=\mathrm{Proj}(\cS)$
and we let  $p: \ppp(\cE) \to Y$ denote the  projection morphism. Note that there is a natural map $\ppp(\cE) \to \p(\HH^0(Y,\cE)^{*}).$

\subsection{Skew-symmetric forms and scrolls}

Let $m$ and $k$ be two integers, with $2k\geq m+1$, and let $V$ be a $\kk$-vector space of
dimension $2k$ and $U$ be a $\kk$-vector space of dimension $m$.
Let us introduce the projective spaces $\p(V)$ and $\p(U)$.
We consider the space $\wedge ^{2} V^{*}$ of skew-symmetric linear forms on
$V$. This is well-known to be canonically isomorphic to the space
$\HH^{0}(\p(V),\Omega_{\p(V)}(2))$ of twisted $1$-forms on $\p(V)$.

Let $\phi$ be an injective map $U \to \wedge^{2} V^{*}$.
We denote by $[\phi]$ the point of the Grassmannian
$\G(m,\wedge^{2} V^{*})$ of $m$-dimensional vector subspaces of
$\wedge^{2} V^{*}$ corresponding to $\phi$.
In view of the identification
$\HH^{0}(\p(V),\Omega_{\p(V)}(2)) \cong \wedge^{2} V^{*}$,
we may consider $\phi$ as a map:
\begin{equation}
  \label{phi}
  U \ts \OO_{\p(V)} \to \Omega_{\p(V)}(2),  
\end{equation}
which we still denote by $\phi$.

\begin{definition}
  Given a map $\phi : U \to \wedge^{2} V^{*}$ as in \eqref{phi},
  we define the subscheme $X_\phi \subset \p(V)$ as the degeneracy
  locus $\rD_{m-1}(\phi)$ of $\phi$.
  The underlying set of $X_\phi$ consists of the points of $\p(V)$ where $\phi$ is not of maximal rank.
  If the codimension of $X=X_\phi$ is $2k-m$, we say that $X_\phi$ is the
  {\it Palatini scroll} in $\p(V)$ associated to $\phi$.
  In turn, the Hilbert scheme $\cH_m(V)$ of Palatini scrolls is defined as the
  (union of) component(s) of the Hilbert scheme of subschemes of
  $\p(V)$ containing Palatini scrolls.
\end{definition}

We refer for instance to \cite{pragacz:enumerative} for basic facts
on degeneracy loci.
Observe that the variety $X$ can be regarded as the degeneracy locus of
the transpose map:
\begin{equation}
 \label{phitrasposta}
\tra \phi: \cT_{\p(V)}(-2) \to U^* \ts \OO_{\p(V)}.
\end{equation}
Notice that,
composing $\tra \phi$ with the (twisted) Euler exact sequence on $\p(V)$:
\[
  0 \to \OO_{\p(V)}(-2) \to V \ts \OO_{\p(V)}(-1) \to \cT_{\p(V)}(-2) \to 0,
\]
we obtain a map:
\[
F_{\phi} : V \ts \OO_{\p(V)}(-1) \to U ^* \ts \OO_{\p(V)},
\]
whose image is just $\im(\tra \phi)$.
Thus, a Palatini scroll $X$ defined as the degeneracy locus of
a map $\phi$ can be seen as well as the degeneracy locus of the map $F_{\phi}$.
This map is an $m\times 2k$ matrix of linear forms,
and $X$ is defined by the vanishing of all its $m\times m$ minors.
In coordinates, the map $F_\phi$ looks like:
\[
F_{\phi} = 
\begin{pmatrix}
\sum_{j=1}^{2k} a_{1,j}^1 x_j & \cdots & \sum_{j=1}^{2k} a_{2k,j}^1 x_j\\
\vdots && \vdots \\
\sum_{j=1}^{2k} a_{1,j}^m x_j & \cdots & \sum_{j=1}^{2k} a_{2k,j}^m x_j \\
\end{pmatrix},
\]
for some constants $a_{i,j}^\ell$ satisfying:
\[
a_{i,j}^\ell = - a_{j,i}^\ell, \qquad \mbox{for all $i,j,\ell$}.
\]
Note that a point $[v]$ of $\p(V)$ corresponding to a vector
$v=(v_1,\ldots,v_{2k})\in V$ lies in $X_\phi = \rD_{m-1}(\phi) = \rD_{m-1}(F_\phi)$
if and only if there is a vector $u = (u_1,\ldots,u_m) \in U$ such
that $v$ lies in
$\ker(\tra F_{[u]})$, i.e. such
that:
\[
\begin{pmatrix}
\sum_{j=1}^{2k} a_{1,j}^1 v_j & \cdots & \sum_{j=1}^{2k} a_{1,j}^m v_j\\
\vdots && \vdots \\
\sum_{j=1}^{2k} a_{2k,j}^1 v_j & \cdots & \sum_{j=1}^{2k} a_{2k,j}^m v_j \\
\end{pmatrix} \cdot
\begin{pmatrix}
u_1 \\
\vdots \\
u_m
\end{pmatrix} = 0.
\]
This happens if and only if:
\begin{equation}
  \label{coordinate}
  \sum_{j,\ell} a_{i,j}^\ell v_j u_\ell = 0, \qquad \mbox{for all $i$}.  
\end{equation}

\begin{remark}
  Let $\phi = \phi_m$ be a map $\OO_{\p^{2k-1}}^m \to
  \Omega_{\p^{2k-1}}(2)$.
  Then the degree of $X_m = X_{\phi_m}$ is:
  \begin{align} \label{grado}
    & \deg(X_m) = \sum_{i=0}^{2k-m} (-1)^{i}{2k -1-i\choose {m-1}}, 
  \end{align}
    see \cite{bazan-mezzetti}.
    Indeed, since $X_m$ has the expected codimension, it suffices to compute the $(2k-m)$-th Chern class of $\Omega_{\p^{2k-1}}(2)$.
\end{remark}

\subsection{Skew-symmetric forms and pfaffian hypersurfaces}

Let $\phi$ be as in \eqref{phi}.
Note that we can consider $\phi$ also as a map $U \to V^* \ts V^*$,
i.e. as an element of $\Hom_\kk(U,V^* \ts V^*)$.
In turn, this vector space is naturally isomorphic to 
$\Hom_\kk(U \ts V,V^*)$.
So, in view of the canonical isomorphism:
\[
\Hom_\kk(U \ts V,V^*) \cong
\Hom_{\p(U)}(V \ts \OO_{\p(U)}(-1),V^* \ts \OO_{\p(U)}),
\]
the map $\phi$ gives a matrix of linear forms:
\[
  M_{\phi} : V \ts \OO_{\p(U)}(-1) \to  V^{*} \ts \OO_{\p(U)}.
\]
The fact that the map $\phi$ lies in
$\Hom_\kk(U,\wedge^2 V^*)$ implies that the matrix  $M_\phi$ lies
in $\HH^0(\p(U), \wedge^2 V^* \ts \OO_{\p(U)}(1))$, that is, the
matrix $M_\phi$ is skew-symmetric.
This means that the transpose $\tra M_\phi$, once twisted by $\OO_{\p(U)}(-1)$, equals $-M_\phi$
(we still write $\tra M_\phi = -M_\phi$).
Therefore, the determinant of the matrix $M_\phi$ is the square of a
homogeneous polynomial of degree $k$, which is called the pfaffian
$\Pf(M_\phi)$ of $M_\phi$.
If the form $\Pf(M_\phi)$ is non-zero, we set $Y_\phi$ for the hypersurface in $\p(U)$,
defined by $\Pf(M_\phi)$. So the degree of $Y_\phi$ is $k$.
Set $Y=Y_\phi$, and define the hyperplane divisor class $H_Y$ as
$c_1(\OO_Y(1))$, where $\OO_Y(1)$ is the restriction to $Y$ of $\OO_{\p(U)}(1)$.
A different way to see the hypersurface $\Pf(M_\phi)$ is as the
intersection of $\p(U)$ and the pfaffian $\Pf$ in $\p(\wedge^2 V^*)$,
where $\p(U)$ is embedded in $\p(\wedge^2 V^*)$ by $\phi$ and
$\Pf$ is the hypersurface $\check{\G}(2,V) \subset \p(\wedge^2 V^*)$
dual to $\G(2,V) \subset \p(\wedge^2 V)$.

Let us now assume that $Y=Y_\phi$ is integral.
Note that the singular locus of $Y$ 
contains the further degeneracy locus $\rD_{2k-4}(M_\phi)$ of the matrix
$M_{\phi}$. Thus the assumption that $Y$ is integral implies that
$M_\phi$ is generically of corank $2$ over $Y$.
We define thus the rank $2$ sheaf $\cE_\phi = \cok(M_\phi)$ supported over $Y$.
In view of \cite[Theorem B, Corollary 2.4]{beauville:determinantal}, one can
consider the sheaf $\cE_\phi$ as a rank-$2$ torsion-free sheaf supported on $Y$, such that: 
\begin{align}
  \nonumber & c_1(\cE_\phi) = (k-1)H_Y, \\
  \label{ACM} & \HH^i(Y,\cE_\phi(t)) = 0, && \mbox{for all $t\in \Z$ and  $0< i< m-2$}.
\end{align}
The sheaf $\cE_\phi$ is ACM (which stands for arithmetically
Cohen-Macaulay), hence reflexive, see \cite[Proposition 2.3]{hartshorne-casanellas:biliaison}.
We have $V^{*} \cong \HH^0(Y,\cE_\phi)$, hence a natural map
$\ppp(\cE_\phi) \to \p(V)$, which we denote by $q$.

The scroll $X_{\phi}$ is the image in
$\p(V)$ of the map $q$.
Therefore, a Palatini scroll is generically ruled over the pfaffian
hypersurface $Y_\phi$ of $\p(U)$. 
The hypersurface $Y=Y_\phi$ is called the {\it base} of the scroll $X=X_\phi$.
We have thus a diagram:
\[
Y \xleftarrow{p} \ppp(\cE_{\phi}) \xrightarrow{q} X \subset \p(V),
\]
where $p$ is the scroll map.

Let us rephrase the situation in coordinates.
Setting $M=M_\phi$, we note that a point $[u]$ of $Y$ corresponds to a 
vector $u = (u_1,\ldots,u_m) \in U$ such that $M_{[u]}$ has not maximal rank.
This means that there is a vector $v = (v_1,\ldots,v_{2k})$ in $V$
such that \eqref{coordinate} holds. In other words, 
an element $[v,u] \in \ppp(\cE_\phi)$, is the class of a pair $(v,u)$
where $v$ is a vector lying in the space $\cok(M_{[u]})$.
The maps $p$ and $q$ are respectively the two projections of $[v,u]$ to $u$ and $v$.

Given a Palatini scroll $X$, we denote by $H$ the divisor
class $c_1(\OO_X(1))$, where $\OO_X(1)$ is obtained restricting
$\OO_{\p(V)}(1)$ to $X$.
Note that $H$ is the first Chern class of the tautological line bundle on the variety
$\ppp(\cE_{\phi})$, namely we have:
\[
p_*(q^*(\OO_X(H))) \cong \cE_{\phi}.
\]

\begin{lemma} \label{iso}
  If the locus $\rD_{m-2}(\phi)$ is empty, then $q:\ppp(\cE_\phi)
  \to X_\phi$  is an isomorphism.
\end{lemma}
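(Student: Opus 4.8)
The plan is to show that $q:\ppp(\cE_\phi)\to X_\phi$ is bijective and then promote this to an isomorphism by working fiberwise over $Y$, using that $\cE_\phi$ is reflexive (in fact ACM) and that the hypothesis $\rD_{m-2}(\phi)=\varnothing$ forces the corank of $M_\phi$ to equal exactly $2$ at every point of $Y$. First I would unwind the coordinate description given just above the statement: a point of $\ppp(\cE_\phi)$ lying over $[u]\in Y$ is a class $[v,u]$ with $v$ a nonzero element of $\cok(M_{[u]})\cong\ker(\tra M_{[u]})$, and by \eqref{coordinate} the image $q([v,u])=[v]$ is precisely a point of $X_\phi=\rD_{m-1}(\phi)$, witnessed by the vector $u$. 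So surjectivity of $q$ onto $X_\phi$ is immediate from the set-theoretic descriptions of the two degeneracy loci. For injectivity, suppose $q([v,u])=q([v',u'])=[v]$ with $[v]\in X_\phi$; then both $u$ and $u'$ lie in $\ker(\tra F_{[v]})$ in the notation of \eqref{phitrasposta}, i.e. in the kernel of the $m\times 2k$ matrix obtained by contracting the skew-forms against $v$. The point is that this kernel is $1$-dimensional exactly when $[v]\notin\rD_{m-2}(\phi)$; since $\rD_{m-2}(\phi)=\varnothing$ by hypothesis, every $[v]\in X_\phi$ has $[u]$ uniquely determined up to scalar, and then $v$ is determined inside the corank-$2$ cokernel $\cok(M_{[u]})$ as well, giving $[v,u]=[v',u']$.

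Next I would upgrade bijectivity to an isomorphism of schemes. The morphism $q$ is proper (as $\ppp(\cE_\phi)$ is proper over the field and $\p(V)$ is separated), and $X_\phi$ is its set-theoretic image; restricting the target, $q:\ppp(\cE_\phi)\to X_\phi$ is a proper bijective morphism. To conclude it is an isomorphism it suffices to check it is unramified, or equivalently (since it is already bijective and proper) that it induces isomorphisms on completed local rings, and for this I would argue over the base $Y$. Because $\rD_{2k-4}(M_\phi)\subseteq\mathrm{Sing}(Y)$ and the corank of $M_\phi$ is constantly $2$ on $Y$ (no drop to $\rD_{m-2}$ type behaviour is even relevant here since that concerns $\tra\phi$, but the $M_\phi$ side is controlled by integrality of $Y$ as recalled above), the sheaf $\cE_\phi=\cok(M_\phi)$ is a rank-$2$ reflexive ACM sheaf, and $q$ is the map to $\p(V)$ associated to the tautological quotient, with $p_*q^*\OO_X(H)\cong\cE_\phi$. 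On the locus where $\cE_\phi$ is locally free (the complement of a locus of codimension $\geq 2$ in $Y$, by reflexivity), $\ppp(\cE_\phi)$ is a genuine $\p^1$-bundle and $q$ is the gluing of the linear embeddings of these $\p^1$'s into $\p(V)$; injectivity on points plus injectivity of differentials there is a direct check using that $\OO_X(H)$ restricts to $\OO_{\p^1}(1)$ on each ruling.

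I would then handle the singular locus of $Y$: over $\mathrm{Sing}(Y)$ the fiber of $p$ can still be described as $\ppp$ of the local cokernel, and the condition $\rD_{m-2}(\phi)=\varnothing$ together with reflexivity of $\cE_\phi$ pins down these fibers so that $q$ remains an isomorphism there too; concretely, one checks that the fiber of $q$ over any $[v]\in X_\phi$ is reduced of length one by exhibiting the inverse $[v]\mapsto[v,u]$, where $u$ spans the (uniquely determined) line $\ker(\tra F_{[v]})$, as a morphism — it is defined by the cokernel/kernel cokernel construction and hence algebraic. The main obstacle is exactly this last point: making the scheme-theoretic argument watertight at points where $\cE_\phi$ fails to be locally free, i.e. ruling out that $q$ contracts a positive-dimensional fiber or is ramified over $\mathrm{Sing}(Y)$. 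I expect this to be dispatched by invoking reflexivity of $\cE_\phi$ (codimension-$\geq 2$ non-locally-free locus, so the $\p^1$-bundle picture holds in codimension $1$ on $Y$ and in particular over all of $Y$ away from a small set), combined with Zariski's main theorem: a proper bijective morphism to a (generically smooth, here automatically reduced and — one checks — normal enough) target with reduced fibers is an isomorphism. If $X_\phi$ is not known a priori to be normal, the cleanest route is to observe that $\ppp(\cE_\phi)$ is Cohen–Macaulay (as a $\p$-bundle-type construction over the ACM, hence CM, hypersurface $Y$) and $q$ is finite birational onto $X_\phi$, so $q$ exhibits $\ppp(\cE_\phi)$ as the normalization; the hypothesis $\rD_{m-2}(\phi)=\varnothing$ is precisely what forces $X_\phi$ to already be normal (indeed smooth along the generic ruling) so that normalization is the identity, giving the isomorphism.
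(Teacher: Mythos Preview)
Your set-theoretic bijectivity argument is exactly the one in the paper: surjectivity from the coordinate description \eqref{coordinate}, and injectivity because $\rD_{m-2}(\phi)=\varnothing$ forces $\ker(\phi_{[v]})\subset U$ to be a line for every $[v]\in X_\phi$.

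Where you diverge is in the scheme-theoretic upgrade. The paper does not go through Zariski's Main Theorem or normalization at all; it simply writes down the inverse morphism. The point is that once $\rD_{m-2}(\phi)=\varnothing$, the restriction $\phi|_X$ has constant rank $m-1$, so $\ker(\phi|_X)$ is a \emph{line bundle} on $X$ (this is exactly the sheaf $\sL_\phi^*$ introduced in the next subsection). The assignment $[v]\mapsto [v,u]$ with $u$ spanning that kernel is therefore a morphism on the nose, and it is inverse to $q$. You actually state this (``it is defined by the cokernel/kernel construction and hence algebraic''), but then you don't commit to it and instead reach for ZMT.

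That detour has real gaps. You assert that $\rD_{m-2}(\phi)=\varnothing$ ``is precisely what forces $X_\phi$ to already be normal'', but you give no argument for this, and it is not obvious: determinantal loci of expected codimension need not be normal without further input. Likewise, the claim that $\ppp(\cE_\phi)$ is Cohen--Macaulay over the non-locally-free locus of $\cE_\phi$ is not justified; $\mathrm{Proj}$ of the symmetric algebra of a non-locally-free reflexive sheaf can be badly behaved. Since the direct inverse-morphism argument is both shorter and avoids these issues, you should drop the second half and just run the paper's construction.
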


\begin{proof}
  Set $M=M_\phi$, $\cE=\cE_\phi$.
  Recall that a point of $\ppp(\cE)$ can be written as the class $[v,u]$ of a
  pair $(v,u)$, where the class $[u]$ of the vector $u$ of $U$ lies in
  $Y$ (so that $M_{[u]}$ is degenerate) and the vector $v$ of $V$ lies in $\cok(M_{[u]})$.
  The map $q$ associates to $[v,u]$ the class $[v]$.
  An inverse to the map $q$ can be thus constructed as follows.
  Given a point $[v]$ of $X$, we must have
  $0 \neq \ker(\phi_{[v]}) \subset U$,
  for $X$ consists of the points of $\p(V)$ where
  $\phi$ is degenerate. Now, since the degeneracy locus
  $\rD_{m-2}(\phi)$ is empty, this kernel must be generated by a
  single vector $u$. Note that $v$ naturally lies in $\cok(M_{[u]})$
  since $M_{[u]}(v)=\phi_{[v]}(u)=0$.
  Therefore we can associate the pair $[v,u]$ to
  $[v]$, and this gives an inverse to $q$.
\end{proof}

\begin{remark}
  Note that the hypersurface $Y = Y_{\phi} \subset \p(U)$ which is the
  base of the scroll $X_{\phi}$ is
  singular as soon as $m\geq 7$. Indeed, the singular locus of $Y$
  contains the degeneracy locus $\rD_{2k-4}(M_\phi)$ of the matrix $M_{\phi}$,
  i.e. the subscheme of $\p(U)$ where the rank of $M_{\phi}$ drops at
  least by four. This locus has codimension at most $6$, so it is not
  empty as soon as $m \geq 7$.

  By the same reason,
  the sheaf $\cE = \cE_{\phi}$ over the hypersurface $Y_{\phi}$ is not
  a vector bundle as soon as $m\geq 7$. Indeed, the locus where $\cE$ is
  not locally free contains $\rD_{2k-4}(M_\phi)$. Recall that anyway
  $\cE$ is ACM (hence reflexive).
  In fact, if $X$ is smooth then $\rD_{m-2}(\phi)=\emptyset$, so $X
  \cong \ppp(\cE)$ by Lemma \ref{iso}.
  So $\cE$ is a B\v anic\v a sheaf in  the sense of \cite{ballico-wisniewski}.
\end{remark}

\begin{remark} One can ask which hypersurfaces of degree $k > 1$ of $\p(U)$ arise as the
  pfaffian of a matrix of the form $\phi$
  (we say that $Y$ is a linear pfaffian in this case).
  Let us collect some well-known answers to this question. 
  \begin{description}
  \item[$m=3$] Any smooth curve $Y \subset \p(U)$ is a linear pfaffian 
  by \cite{beauville:determinantal}, see
  also \cite{buckely-kosir:pfaffian}.
\item[$m=4$]   A general surface $Y \subset \p(U)$ is  a linear
  pfaffian if and only if $k$ is at most $15$.
  Otherwise, surfaces of degree $k$ which are a linear pfaffian fill in only a
  closed subset of $\p(\Sym^{k} U^{*})$.
  We refer to
  \cite{beauville:determinantal}.
  In particular when $m=4,k=3$, one knows that every cubic surface is a linear pfaffian.
  We refer again to \cite{beauville:determinantal} and to
  \cite{fania-mezzetti} for the singular case.
  The cases with $m=4,k \leq 5$ are also treated in
  \cite{dani:cubic:ja}, \cite{dani-chiantini:to-appear},
  \cite{dani-chiantini:linear-pfaffian:to-appear}.
  \item[$m=5$] A general threefold $Y \subset \p(U)$ is a linear pfaffian if and
  only if $k\leq 5$.
  We refer to \cite{beauville:cubic}, \cite{markushevich-tikhomirov},
  \cite{kuznetsov:v14} for the case $k=3$,
  \cite{madonna:complutense} and 
  \cite{iliev-markushevich:degree-quartic} for the case $k=4$,
  \cite{chiantini-madonna} for $k=5$,
  \cite{kumar-rao-ravindra:three-dimensional}, \cite{chiantini-madonna:international} for $k\geq 6$.
  \item[$m=6$] A general fourfold of degree $k$ is not pfaffian unless
  $k=2$, see \cite{kumar-rao-ravindra:hypersurfaces}.
  \item[$m\ge7$] No smooth hypersurface in $\p(U)$ is a linear
    pfaffian, see \cite{kleppe:pfaffian}.
  \end{description}
\end{remark}

\subsection{Kernel, cokernel and normal bundles}

Consider a Palatini scroll $X=X_\phi$ defined by a map $\phi$ as in
\eqref{phi}, and assume that the degeneracy locus $\rD_{m-2}(\phi)$ is
empty (which is the case if $X$ is smooth).
Then it is well-known (see for instance \cite{pragacz:enumerative}) that 
restricting $\phi$ to $X$ we obtain two locally free sheaves on $X$,
$\ker(\phi_{|X})$ and $\cok(\phi_{|X})$, respectively of rank $1$ and $2k-m$.
We restrict $\tra \phi$ to $X$ and we set  
set $\sL_\phi = \cok(\tra \phi_{|X})$. Then $\cok(\tra \phi)$ is
the extension by zero of $\sL_\phi$ to $\p(V)$.
We get $\ker(\tra \phi_{|X})^* \cong \cok(\phi_{|X})$ and 
$\sL_\phi$ is a line bundle on $X$ isomorphic to $\ker(\phi_{|X})^*$.

Moreover, the normal bundle $\cN$ of $X$ in $\p(V)$ is isomorphic to
$\ker(\tra \phi_{|X})^* \ts \sL_\phi$. We have thus the exact sequence
defined on $X$:
\begin{equation}
  \label{es:normal}
  0 \to \OO_X \to U \ts \sL_\phi \xr{\alpha} \Omega_{\p(V)}(2) \ts \sL_\phi \to \cN \to 0,
\end{equation}
where the map $\alpha$ is simply $\phi_{|X}$ tensored with $\sL_\phi$.
This gives rise to the short exact sequences:
\begin{gather}
  \label{short1}
   0 \to \OO_X \to U \ts \sL_\phi \to \im(\alpha) \to 0, \\
  \label{short2}
   0 \to \im(\alpha) \to \Omega_{\p(V)}(2) \ts \sL_\phi \to \cN \to 0. 
\end{gather}

Note that, under the natural isomorphism
$\Hom_\kk(U,V^*\ts V^*) \cong \Hom_\kk(V,U^*\ts V^*)$, the map $\phi$
gives rise to a linear map
\begin{equation}
  \label{fphi}
  f_\phi: V \to U^* \ts V^*. 
\end{equation}
Note also that the map $f_\phi$ agrees with the map obtained by taking global
sections of  \eqref{phitrasposta} twisted by $ \OO_{\p(V)}(1)$.

\subsection{Assumptions on skew-symmetric forms} \label{hyp}

We summarize here the hypothesis that we will need to formulate our
results, and some of their basic consequences.
Let again $U$ and $V$ be $\kk$-vector spaces respectively of
dimension $m$ and $2k$, with $k\geq 3$, and let $\phi$ be a map $U \to \wedge^2 V^*$, or in
other words let $\phi$ be as in \eqref{phi}.

The assumptions that we will need on the map $\phi$ are the following.
\begin{enumerate}
\item \label{hyp:smooth} The subscheme $X \subset \p(V)$ defined as $\rD_{m-1}(\phi)$
  is a smooth irreducible variety of codimension $2k-m$ in $\p(V)$, i.e. $\dim(X)=m-1$.
  This is an open condition, and it implies $\rD_{m-2}(\phi) = \emptyset$. This in turn forces:
  \[
  m \leq k+1.
  \]
  In the above range, the condition takes place for general $\phi$.
\suspend{enumerate}
  As a consequence of this we get the following two conditions:
\resume{enumerate}
\item \label{not-all} the matrix $M_\phi$ is generically of maximal
  rank, so that $\Pf(M_\phi)$ does define a hypersurface $Y_\phi$  of $\p(U)$.
  Equivalently, the image of $\p(U)$
  in $\p(\wedge^2 V^*)$ is not contained in
  the pfaffian hypersurface;
\item \label{hyp:injective} the map $f_\phi$ of \eqref{fphi} is
  injective.
\suspend{enumerate}
We further assume:
\resume{enumerate}
\item \label{hyp:sing} the hypersurface $Y=Y_\phi$ which is the base
  of $X$ is smooth in codimension $4$.
  This is an open condition that 
  takes place for general $\phi$.
  In view of a result of Grothendieck, \cite[Expos\'e XI, Corollaire
  3.14]{sga2}, this condition implies that $Y$ is locally
  factorial (see \cite[Corollary 14]{hansen:deligne}).
  Clearly the hypersurface $Y$ is also integral in this case.
\end{enumerate}

\begin{remark}
In the assumption that $X$ has expected codimension, we have
$X \neq \emptyset$ as soon as $m\geq 2$.
The scheme $X$ is defined by the minors of order $m$ of the matrix
$F_\phi$, so $X$ is cut scheme-theoretically by homogeneous forms of
degree $m$.
In particular we have:
\begin{equation}
\label{degree} \HH^0(\p(V),\cI_X(m-1))=0, 
\end{equation}
if $m\geq 2$.
\end{remark}

\section{Image and cokernel sheaves}

In this section we work with the image and cokernel sheaves associated
to a map $\phi$ as in \eqref{phi}.
Our goal is to show that 
we can read off $\OO_{Y_\phi}(1)$ as the cokernel of $\tra \phi$,
where $Y_\phi \subset \p(U)$ is the base of the scroll $X_\phi$.
This will hold under the assumption that $m\geq 4$ or that $k\geq 4$ if $m=3$.
We first need a vanishing result (proved by induction) on the image of $\tra \phi$.
We denote by $I_\phi$ the torsion-free coherent sheaf defined as the image of $\tra \phi$.

\begin{lemma} \label{vanishing}
  Let $m\geq 2$ and $k\geq 2$ be integers, $V$ and $U$ be vector spaces of
  dimension respectively $2k$, and $m$.
  Let $X_\phi$ be the degeneracy locus $\rD_{m-1}(\phi)$ of a map $\phi: U \ts
  \OO_{\p(V)} \to \Omega_{\p(V)}(2)$
  satisfying the conditions listed in Section \ref{hyp}.
  Then we have:
  \begin{align}
    \label{sezioni}
    & \HH^0(\p(V),I_\phi)=0.
  \end{align}
\end{lemma}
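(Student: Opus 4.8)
The plan is to read off $\HH^0(\p(V),I_\phi)$ from the short exact sequence attached to the map \eqref{phitrasposta},
\[
0 \to I_\phi \to U^* \ts \OO_{\p(V)} \xr{\pi} \cok(\tra\phi) \to 0 .
\]
Since the hypotheses of Section~\ref{hyp} force $\rD_{m-2}(\phi)=\emptyset$, we recalled above that $\cok(\tra\phi)$ is the extension by zero of the line bundle $\sL_\phi=\cok(\tra\phi_{|X})\cong\ker(\phi_{|X})^{*}$ on $X=X_\phi$, so $\HH^0(\p(V),\cok(\tra\phi))=\HH^0(X,\sL_\phi)$; moreover the restriction of $\pi$ to $X$ is the canonical surjection $U^*\ts\OO_X\to\ker(\phi_{|X})^{*}$ dual to the inclusion of the line sub-bundle $\ker(\phi_{|X})\subset U\ts\OO_X$. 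Taking global sections, left exactness of $\HH^0(-)$ gives
\[
\HH^0(\p(V),I_\phi)=\ker\bigl(U^*\to\HH^0(X,\sL_\phi)\bigr)=\{\, \xi\in U^*: \xi|_{\ker(\phi_{|X})_p}=0 \ \text{for all } p\in X \,\}.
\]

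Thus it suffices to show that the lines $\ker(\phi_{|X})_p\subset U$, as $p$ runs over $X$, span $U$. Consider the morphism $\psi:X\to\p(U)$, $p\mapsto[\ker(\phi_{|X})_p]$, which is well defined because $\ker(\phi_{|X})$ is a line sub-bundle. By Lemma~\ref{iso} the map $q:\ppp(\cE_\phi)\to X$ is an isomorphism, and the proof of that lemma together with the coordinate description around \eqref{coordinate} (in which $p$ and $q$ are the two projections of $\ppp(\cE_\phi)$) shows that $\psi$ coincides with the scroll map $p\circ q^{-1}:X\to Y_\phi$; in particular $\psi(X)=Y_\phi$. By the hypotheses of Section~\ref{hyp} the base $Y_\phi$ is an integral hypersurface of $\p(U)$ of degree $k\geq 2$, hence is contained in no hyperplane. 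Therefore the lines $\ker(\phi_{|X})_p$ span $U$, the form $\xi$ above must vanish, and $\HH^0(\p(V),I_\phi)=0$.

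The only nonformal ingredient is this last non-degeneracy input — that the classifying morphism $\psi$, equivalently the scroll structure of $X$ over $Y_\phi\subset\p(U)$, is linearly non-degenerate — and it is precisely here that the assumptions on $Y_\phi$ (it is a genuine integral hypersurface, which for degree $\geq 2$ cannot lie in a hyperplane) are used; everything else is cohomological bookkeeping. As an alternative, closer to the statement's mention of an induction, one can argue by induction on $m$: for a general hyperplane $U'\subset U$ the restriction $\phi'=\phi|_{U'}$ still satisfies the hypotheses of Section~\ref{hyp}, and $I_{\phi'}$ is the image of $I_\phi$ in $(U')^{*}\ts\OO_{\p(V)}$; the kernel of the induced surjection $I_\phi\epi I_{\phi'}$ is a rank-one torsion-free subsheaf of $\OO_{\p(V)}$ which is a \emph{proper} ideal sheaf — because, the $\ker(\phi_{|X})_p$ spanning $U$, they are not all contained in $U'$ — hence carries no nonzero section, and combined with $\HH^0(\p(V),I_{\phi'})=0$ from the inductive hypothesis this yields the claim, the base case $m=2$ being handled by the direct argument above.
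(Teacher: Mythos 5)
Your main argument is correct but takes a genuinely different route from the paper's. The paper proves \eqref{sezioni} by induction on $m$: restricting $\phi$ to a general hyperplane $U_{m-1}\subset U_m$ produces the exact sequence $0\to\cI_{X_m}\to I_m\to I_{m-1}\to 0$, and since $\HH^0(\p(V),\cI_{X_m})=0$ the problem descends to the base case $m=2$, which is settled by showing that a nonzero section of $I_2$ would force a splitting $I_2\cong\OO_{\p(V)}\oplus\cI_{X_2}$, contradicting $\HH^0(\p(V),\cI_{X_2}(1))=0$ together with the fact that $I_2$ is the image of $F_\phi$. You instead identify $\HH^0(\p(V),I_\phi)$ with the space of linear forms on $U$ vanishing on every kernel line $\ker(\phi_{|X})_p$, and kill it by observing that these lines sweep out the pfaffian base $Y_\phi$, which is linearly nondegenerate in $\p(U)$. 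This is more direct and gives a transparent geometric reason for the vanishing; the trade-off is that it leans on the scroll structure (Lemma~\ref{iso}, surjectivity of $p$) and on the integrality of $Y_\phi$, none of which the paper's induction needs for the intermediate $\phi_{m'}$.

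One caveat: your nondegeneracy step does not literally cover $m=2$, which the statement includes. For $m=2$ the base $Y_\phi\subset\p(U)=\p^1$ is a length-$k$ divisor, which is never integral when $k\ge 2$, so the assertion that an integral hypersurface of degree $\ge 2$ lies in no hyperplane does not apply. What you actually need there is that $Y_\phi$ is not supported at a single point, i.e.\ that $\Pf(M_\phi)$ is not proportional to $\ell^k$ for a linear form $\ell$; this can be extracted from condition \eqref{hyp:smooth} of Section~\ref{hyp} (if $\Pf(M_\phi)=\ell^k$ then $X=\rD_1(\phi)$ is supported on the line $\p(\ker M_{[\ell^\perp]})$ while having degree $k$ by \eqref{grado}, hence is non-reduced and not a smooth variety), but it requires a separate sentence. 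The same remark applies to the base case of your inductive variant, which otherwise reproduces the paper's argument. For $m\ge 3$ your proof is complete as written.
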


\begin{proof}
  For the sake of this lemma, we let $\phi_m$ be a map $\phi: U_m \ts
  \OO_{\p(V)} \to \Omega_{\p(V)}(2)$, where $U_m$ has dimension $m$.
  Correspondingly, we denote $X_m = \rD_{m-1}(\phi_m)$,
  and $\sL_m = \cok(\tra \phi_m)$, $I_m = \im(\tra
  \phi_m)$, $F_m = F_{\phi_m}$.
  Restricting $\phi_m$ to an $(m-1)$-dimensional subspace $U_{m-1}$ of $U_m$
  we obtain a map $\phi_{m-1}$. Transposing $\phi_{m-1}$ we obtain
  thus the degeneracy locus $X_{m-1}$ and the sheaves $\sL_{m-1}$
  and $I_{m-1}$. We also define the pfaffian hypersurface
  $Y_m = Y_{\phi_m} \subset \p^{m-1}$ and the rank $2$ sheaf $\cE_m = \cE_{\phi_m}$.

  We first claim that, for a general choice of the subspace $U_{m-1}$, we may
  suppose that $X_{m-1}$ is still of the expected codimension.
  Indeed, since $X_{m} \cong \ppp(\cE_m)$, and $Y_m$ contains $Y_{m-1}$
  as a hyperplane section, the variety $\ppp(\cE_{m-1}) \cong X_{m-1}$ has
  dimension $m-2$, hence expected codimension in $\p(V)$.

  The surjective map $U^*\ts \OO_{\p(V)} \to \sL_m$ provides nonzero
  maps $\OO_{\p(V)} \to \sL_m$, and, since $\sL_m$ is supported on $X_m$,
  such map factors through a (nonzero) map $\OO_{X_m} \to \sL_m$.
  This map is thus injective, and we have
  the following commutative exact diagram:
  \begin{equation}
    \label{diagramma}
    \xymatrix@-1ex{
      & 0 \ar[d] & 0 \ar[d] & 0 \ar[d] \\
      0 \ar[r] & \cI_{X_m} \ar[r] \ar[d] & \OO_{\p(V)}
      \ar[r] \ar[d] & \OO_{X_m} \ar[r] \ar[d] & 0 \\
      0 \ar[r] & I_m \ar[d] \ar^-{\tra \phi_{m}}[r] &
      U_m^* \ts \OO_{\p(V)} \ar[r] \ar[d] & \sL_m \ar[r] \ar[d] & 0 \\
      0 \ar[r] & I_{m-1} \ar^-{\tra \phi_{m-1}}[r] \ar[d] & U_{m-1}^* \ts
      \OO_{\p(V)} \ar[r] \ar[d] & \sL_{m-1} \ar[r] \ar[d] & 0 \\
      & 0 & 0 & 0 
    }  
  \end{equation}

  Recall that the ideal sheaf of $X_m = \rD_{m-1}(\phi_m)$,
  is generated by the minors of order $m$ of
  the matrix $F_{m}$.
  Note that the same holds for all $2 \leq m'\leq m$.
  Recall also that $X_m$ is not empty if $m\geq 2$, hence we may use
  the vanishing \eqref{degree}.
  Therefore, taking global sections in the leftmost column of \eqref{diagramma}
  and using induction, we obtain the vanishing \eqref{sezioni} as soon
  as we prove it for $m=2$.

  We prove now the vanishing \eqref{sezioni} for $m=2$.
  Note that \eqref{grado} implies $\deg(X_1)=0$, so that $X_1 =
  \emptyset$ i.e. $\phi_1$ vanishes nowhere.
  This implies that $I_1$ is isomorphic to $\OO_{\p(V)}$.
  The leftmost column of \eqref{diagramma} reads in this case:
  \[
  0 \to \cI_{X_2} \to I_2 \to \OO_{\p(V)} \to 0.
  \]

  Since clearly $\HH^0(\p(V),\cI_{X_2})=0$, we obtain 
  $\hh^0(\p(V),I_2)\leq 1$. Note that the value $1$ is attained if and only
  if $I_2$ splits as $\OO_{\p(V)} \oplus \cI_2$.
  But, since $I_2$ is the image of the map $F_\phi$, this is
  contradicted by $\HH^0(\p(V),\cI_{X_2}(1))=0$.
  In turn, the last vanishing takes place in view of \eqref{degree}.
\end{proof}

\begin{lemma} \label{O1}
  Fix the assumptions as above and set $\sL_\phi = \cok(\tra \phi)$.
  Assume moreover $k\geq 4$ in case $m=3$.
  Then we have an isomorphism:
  \[
  \sL_{\phi} \cong q_*(p^*(\OO_{Y_\phi}(1))). 
  \]
\end{lemma}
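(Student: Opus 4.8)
The plan is to read everything off the double fibration $Y_\phi \xleftarrow{p} \ppp(\cE_\phi) \xrightarrow{q} X_\phi$. First, the smoothness assumption of Section \ref{hyp} forces $\rD_{m-2}(\phi)=\emptyset$, so by Lemma \ref{iso} the map $q$ is an isomorphism; in particular $\ppp(\cE_\phi)\cong X_\phi$ is an integral variety, and $q_*(p^*(\OO_{Y_\phi}(1)))$ is just the line bundle $p^*(\OO_{Y_\phi}(1))$ carried over to $X_\phi$ along $q$. So it suffices to exhibit an isomorphism of line bundles $q^*(\sL_\phi)\cong p^*(\OO_{Y_\phi}(1))$ on $\ppp(\cE_\phi)$. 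Moreover, since $\rD_{m-2}(\phi)=\emptyset$ the restriction $\phi_{|X}$ has constant rank $m-1$, so $\sK:=\ker(\phi_{|X})$ is a line sub-bundle of $U\ts\OO_X$ and $\sL_\phi\cong\sK^{*}$, as recalled above. Dualizing, the statement reduces to the identification of two rank-one sub-bundles of $U\ts\OO_{\ppp(\cE_\phi)}$,
\[
q^*(\sK)\;=\;p^*(\OO_{Y_\phi}(-1)).
\]

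To produce the sub-bundle on the right, restrict the tautological sequence $0\to\OO_{\p(U)}(-1)\to U\ts\OO_{\p(U)}\to\cT_{\p(U)}(-1)\to 0$ to $Y=Y_\phi$ (it stays exact with locally free terms since $\OO_{\p(U)}(-1)$ is a sub-bundle) and pull it back by $p$; this displays $p^*(\OO_Y(-1))$ as a \emph{saturated} rank-one subsheaf of $U\ts\OO_{\ppp(\cE_\phi)}$ whose fibre at a point $[v,u]$, with $[u]=p([v,u])$, is the line $\langle u\rangle\subset U$. On the other side, transport $\phi_{|X}$ along $q$ to get $\psi:=q^*(\phi_{|X})$, with $\ker(\psi)=q^*(\sK)$. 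The key claim is that the composite
\[
p^*(\OO_Y(-1))\hookrightarrow U\ts\OO_{\ppp(\cE_\phi)}\xrightarrow{\ \psi\ } q^*((\Omega_{\p(V)}(2))_{|X})
\]
vanishes. This composite is a global section of a locally free sheaf on the variety $\ppp(\cE_\phi)$, so it is enough to check it at every closed point $[v,u]$, where the induced map of fibres is $\langle u\rangle\hookrightarrow U\xrightarrow{\phi_{[v]}} (\Omega_{\p(V)}(2))_{[v]}$. This is zero because $\phi_{[v]}(u)=M_{[u]}(v)$ — the identity exploited in the proof of Lemma \ref{iso} — and $v\in\cok(M_{[u]})$ by the very definition of the point $[v,u]$ of $\ppp(\cE_\phi)$; equivalently, this is the coordinate relation \eqref{coordinate}.

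Granting the claim, $p^*(\OO_Y(-1))\subseteq\ker(\psi)=q^*(\sK)$, and this inclusion is an equality by saturation: the quotient $q^*(\sK)/p^*(\OO_Y(-1))$ embeds in the locally free sheaf $p^*((\cT_{\p(U)}(-1))_{|Y})$, hence is torsion-free, while it has rank zero and so is torsion — therefore it vanishes. Dualizing and transporting along the isomorphism $q$ yields $\sL_\phi\cong q_*(p^*(\OO_{Y_\phi}(1)))$. The one genuinely delicate point is the passage from pointwise to global vanishing of $\psi$ on $\langle u\rangle$: it requires $\ppp(\cE_\phi)$ to be integral and $\sK$ to be honestly locally free, i.e.\ $Y_\phi$ integral and $\rD_{m-2}(\phi)=\emptyset$, and this — together with the identification $\sL_\phi\cong\sK^{*}$ — is exactly what the running hypotheses of Section \ref{hyp} (reinforced by $k\geq 4$ when $m=3$) supply. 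Equivalently, and perhaps more transparently, the whole argument is the observation that the morphism $X_\phi\to\p(U)$, $[v]\mapsto[\ker(\phi_{[v]})]$, is the one defined by the sections $U^{*}\subseteq\HH^0(X_\phi,\sL_\phi)$, and that through $q$ it coincides with $p$ followed by $Y_\phi\hookrightarrow\p(U)$, which is defined by $U^{*}\subseteq\HH^0(\ppp(\cE_\phi),p^*(\OO_{Y_\phi}(1)))$; comparing the two linear systems gives the asserted isomorphism.
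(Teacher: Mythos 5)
Your proof is correct, but it takes a genuinely different route from the paper's. The paper computes $c_1(\cN)$ in two ways to obtain $(2k-m-1)(c_1(\sM)-H_Y)=0$ in $\Pic(Y)$, where $\sL_\phi\cong p^*(\sM)$, and then needs a case division to conclude $\sM\cong\OO_Y(1)$: Grothendieck--Lefschetz for $m\geq 5$, torsion-freeness of the cokernel of $\Pic(\p(U))\to\Pic(Y)$ for $m=4$, and Castelnuovo's theorem on linear series $g^s_k$ with $s\geq 2$ on smooth plane curves for $m=3$ --- this last step being exactly where $k\geq 4$ enters, since on a plane cubic every degree-$3$ line bundle is a $g^2_3$. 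You bypass all of this by identifying the two tautological rank-one subbundles $q^*(\ker\phi_{|X})$ and $p^*(\OO_Y(-1))$ of $U\ts\OO_{\ppp(\cE_\phi)}$ directly, via the symmetry $\phi_{[v]}(u)=M_{[u]}(v)$ already exploited in Lemma \ref{iso} together with a saturation argument; equivalently, as your closing remark says, $\sL_\phi$ is an invertible quotient of $U^*\ts\OO_X$, hence defines the morphism $[v]\mapsto[\ker\phi_{[v]}]$, which coincides with $p\circ q^{-1}$, and the universal property of $\p(U)$ gives $\sL_\phi\cong(p\circ q^{-1})^*\OO_Y(1)$. All inputs you use ($\rD_{m-2}(\phi)=\emptyset$, $q$ an isomorphism, $\sL_\phi\cong\ker(\phi_{|X})^*$, the relation \eqref{coordinate}) are supplied by Section \ref{hyp} and the preliminary discussion, and the passage from fibrewise to global vanishing is legitimate since the composite is a map of locally free sheaves on the reduced scheme $\ppp(\cE_\phi)\cong X$. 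What your approach buys is both economy (no Picard-group or linear-series machinery) and generality: it does not use $k\geq 4$ for $m=3$ at all. This is not in conflict with the $4{:}1$ behaviour of $\rho$ for $(m,k)=(3,3)$, because the lemma ties $\sL_\phi$ to the plane model $Y_\phi$ determined by $\phi$ itself, and distinct $\phi$'s defining the same elliptic scroll yield distinct degree-$3$ plane models, hence distinct sheaves $\sL_\phi$; the injectivity argument, not this lemma, is what breaks down at $k=3$.
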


\begin{proof}
Set $\sL = \sL_\phi$, and recall that $\sL$ is an invertible sheaf on $X$.
Restricting to $X$ the dual  Euler exact sequence on $\p(V)$ and twisting by
$\sL  \ts \OO_{X} (2)$, we obtain the equality
$c_1( \Omega_{\p(V)|X}(2) \ts \sL)=(2k-2)H+(2k-1)c_1(\sL)$.
Further, the sequence \eqref{short1}  gives
$c_1( \im(\alpha) )=m c_1(\sL)$ while
the sequence   \eqref{short2} gives
$c_1(\cN)=c_1( \Omega_{\p(V)|X}(2) \ts \sL)-c_1(\im(\alpha))$.
Combining them we get:
\[c_1(\cN)=(2k-2)H+(2k-m-1)c_1(\sL).\]

On the other hand, from adjunction we know that
$c_1(\cN)=2k H-c_1(X)$.
Because $X$ is generically a scroll over $Y$, we know that
$c_1(X)=2H+p^*(c_1(Y)-c_1(\cE))=2H+p^*((m+1-2k)H_{Y})$ and thus 
 $c_1(\cN)=(2k-2)H+p^*((2k-m-1)H_{Y})$. Hence:
 \begin{equation}
  \label{torsX}
 (2k-m-1)c_1(\sL)=p^*((2k-m-1)H_{Y}).
\end{equation}
This implies that $c_1(\sL)=p^*(c_1(\sM))$ for some line bundle $\sM
\in \Pic(Y)$, and, by  \eqref{torsX},  we get that:
\[
  (2k-m-1)(c_1(\sM)-H_{Y})=0 \qquad \mbox{in $\Pic(Y)$.}
\]

Now recall that, for $m\ge 5$, the group $\Pic(Y)$ is generated by
$H_Y$ by the theorem Grothendieck-Lefschetz, while, for $m=4$, the
cokernel of the natural restriction map $\Pic(\p(U)) \to \Pic(Y)$ is torsion-free (we refer for instance to \cite{badescu:picard}).
It follows that $c_1(\sM)=H_Y$, and we deduce $\sL \cong p^*(\OO_{Y}(1))$.

It remains to prove the statement for $m=3$.
Note that we have $\hh^0(Y, \sM)=\hh^0(X, p^*(\sM))=\hh^0(X, \sL)$,
and Lemma \ref{vanishing} easily implies that $\hh^0(X, \sL) \geq 3$.
Note also that $\deg(\sM)=\deg(\sL)=k$, by \eqref{torsX}.

Hence the linear system $|\sM|$ is a  $g^s_{k}$ on the curve $Y$, with $s\ge 2$.
Now we use a well-known result of Castelnuovo
(see \cite{castelnuovo:curve}, we refer to \cite[Theorem 2.11]{ciliberto:bologna} 
and \cite{accola:castelnuovo} for a modern treatment).
Namely, any linear series $g^s_k$ on a smooth plane curve $Y$ of
degree $k \geq 4$ (here we need our hypothesis) and with $s \geq 2$ must coincide with the
hyperplane linear system $|\OO_Y(1)|$ (in particular $s=2$).
This gives $c_1(\sM)=H_Y$, and we have proved $\sL=p^*(\OO_{Y}(1))$.
Note that we have also proved that, if $m=3$ and $k\geq 4$, we must have $\hh^0(\p(V),\sL)=3$.
\end{proof}

The previous proof suggests the following.

\begin{problem}
  Let $Y = Y_\phi$ be a general pfaffian surface, i.e. $Y$ is given as
  the pfaffian of a general $2k \times 2k$ skew-symmetric matrix of
  linear forms over $\p^3$. Then, is the Picard group of $Y$
  generated by the hyperplane divisor $H_{Y}$, for $k \geq 16$?
\end{problem}

\begin{lemma}
  Fix the hypothesis as in Lemma \ref{vanishing}. Then we have:  
  \[
   \HH^1(\p(V),I_\phi)=0, \qquad \mbox{for all $m\geq 3$.}
  \]
\end{lemma}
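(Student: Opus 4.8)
The plan is to extract the vanishing $\HH^1(\p(V),I_\phi)=0$ from the same inductive diagram \eqref{diagramma} used in Lemma \ref{vanishing}, now reading off the long exact cohomology sequences one row (or column) at a time. Write as before $I_m$, $\sL_m$, $X_m$ for the sheaves attached to $\phi_m$. The key input we have gained since Lemma \ref{vanishing} is Lemma \ref{O1}: for $m\geq 4$ (and for $m=3$ once $k\geq 4$) we know $\sL_m\cong q_*p^*\OO_{Y_m}(1)$, so $\sL_m$ is, up to the isomorphism $X_m\cong\ppp(\cE_m)$, the restriction of the tautological bundle-free situation $\OO_{Y_m}(1)$ pulled back along the scroll. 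Thus $\HH^i(X_m,\sL_m)=\HH^i(Y_m,\OO_{Y_m}(1))$ for all $i$, and for $i=1$ this is $\HH^1(\p^{m-1},\OO(1))=0$ when $m\geq 3$ by the standard cohomology of hypersurfaces in projective space (use the ideal sheaf sequence $0\to\OO_{\p^{m-1}}(1-k)\to\OO_{\p^{m-1}}(1)\to\OO_{Y_m}(1)\to 0$ and $\HH^1(\p^{m-1},\OO(1))=\HH^2(\p^{m-1},\OO(1-k))=0$ for $m\geq 3$).

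First I would handle the base of the induction. For $m=3$: the top row of \eqref{diagramma} gives $0\to\cI_{X_3}\to\OO_{\p(V)}\to\OO_{X_3}\to 0$, and the middle row $0\to I_3\to U_3^*\ts\OO_{\p(V)}\to\sL_3\to 0$. From the middle row, $\HH^1(\p(V),I_3)$ injects into $\HH^1(\p(V),U_3^*\ts\OO_{\p(V)})=0$ provided the connecting map $\HH^0(\p(V),\sL_3)\to\HH^1(\p(V),I_3)$ is surjective onto the relevant piece, i.e. provided $\HH^0(U_3^*\ts\OO_{\p(V)})\to\HH^0(\sL_3)$ is surjective; by Lemma \ref{vanishing}, $\HH^0(\p(V),I_3)=0$, so $\HH^0(U_3^*\ts\OO)\hookrightarrow\HH^0(\sL_3)$, and since both sides have dimension $3$ (the left tautologically, the right by the last line of the proof of Lemma \ref{O1}) this map is an isomorphism. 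Hence $\HH^1(\p(V),I_3)\hookrightarrow\HH^1(\p(V),U_3^*\ts\OO_{\p(V)})=0$, giving the result for $m=3$.

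For the inductive step, assume $\HH^1(\p(V),I_{m-1})=0$ and consider the leftmost column of \eqref{diagramma}, $0\to\cI_{X_m}\to I_m\to I_{m-1}\to 0$. The long exact sequence gives $\HH^1(\p(V),\cI_{X_m})\to\HH^1(\p(V),I_m)\to\HH^1(\p(V),I_{m-1})=0$, so it suffices to show $\HH^1(\p(V),\cI_{X_m})=0$, equivalently that $\HH^0(\p(V),\OO_{\p(V)})\to\HH^0(X_m,\OO_{X_m})$ is surjective — which holds since $X_m$ is smooth and irreducible (assumption \ref{hyp:smooth}), hence $\HH^0(X_m,\OO_{X_m})=\kk$, and the restriction of the constant $1$ hits it. Therefore $\HH^1(\p(V),I_m)=0$ for all $m\geq 3$ in the admissible range. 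The one place to be careful is that for $m=3$ one must already have $k\geq 4$ for Lemma \ref{O1} — which forces $\hh^0(\p(V),\sL_3)=3$ — so this hypothesis must be carried along; for $m\geq 4$ no extra condition is needed. The main obstacle, such as it is, is matching the $\HH^0$ of $\sL_m$ exactly (so that the connecting map vanishes at the start of the induction), and this is precisely what Lemmas \ref{vanishing} and \ref{O1} were set up to supply.
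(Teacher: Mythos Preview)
Your approach is essentially the paper's: base case $m=3$ via the middle row of \eqref{diagramma} using $\hh^0(\sL_3)=3$ together with $\HH^0(I_3)=0$, then induct up the leftmost column using $\HH^1(\p(V),\cI_{X_{m'}})=0$. Two points deserve attention, though.

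First, your base case as written only covers $k\geq 4$, because you appeal to Lemma \ref{O1} for $\hh^0(\sL_3)=3$. The lemma is stated under the hypotheses of Section \ref{hyp}, which allow $k=3$ (and the case $m=4$, $k=3$ is in range and reduces to it). The paper fills this with one line: for $k=3$ the base $Y_3$ is a smooth plane cubic and $\sL_3$ pulls back from a degree-$3$ line bundle on it, so $\hh^0=3$ by Riemann--Roch even without knowing $\sM\cong\OO_Y(1)$. You should add this.

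Second, in the inductive step you justify $\HH^1(\p(V),\cI_{X_m})=0$ by invoking assumption \eqref{hyp:smooth}, i.e.\ $X_m$ smooth irreducible. That is fine at the top level, but the induction requires the same vanishing for every intermediate $X_{m'}$ obtained by restricting $\phi$ to $U_{m'}\subset U_m$, and those $X_{m'}$ are only known to have the expected codimension, not to be smooth. The paper avoids this by citing \cite[Proposition 1]{bazan-mezzetti} for $\HH^1(\p(V),\cI_{X_{m'}})=0$, which needs only expected codimension. Your connectedness argument can be salvaged (for general $U_{m'}$ the hyperplane section $Y_{m'}$ is integral, hence $\ppp(\cE_{m'})$ and its image $X_{m'}$ are connected), but this should be said. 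Finally, your opening paragraph establishing $\HH^1(X_m,\sL_m)=0$ is never used in the argument and can be dropped.
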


\begin{proof}
  We borrow the notation from the proof of Lemma \ref{vanishing}.
  Let us recall that, by \cite[Proposition 1]{bazan-mezzetti}, we have:
  \begin{align}
    \nonumber    & \HH^1(\p(V),\cI_{X_m}(t)) = 0,   && \mbox{unless $m$ is even and
      $t$ equals $m-2$,} \\
    \nonumber & \HH^2(\p(V),\cI_{X_m}) = 0,   && \mbox{if $m \neq 3$.} 
  \end{align}
  Therefore, taking cohomology in the leftmost column of \eqref{diagramma}
  and using induction on $m$,
  we easily get $\HH^1(\p(V),I_m)=0$ for all $m\geq 4$ once we prove
  $\HH^1(\p(V),I_3)=0$.

  To check the last vanishing, recall that, at the end of the proof of
  the previous lemma we have shown $\hh^0(\p(V),\sL_3)=3$ if $k\geq
  4$. However, the equality $\hh^0(\p(V),\sL_3)=3$ holds even
  if $k=3$ for $\sL$ in this case is a line bundle of degree $3$
  supported on $Y$ which is a smooth elliptic curve.
  In any case, by \eqref{sezioni}, we immediately deduce $\HH^1(\p(V),I_3)=0$, and
  the lemma is proved.
\end{proof}

\section{Normal bundle}

Let again $\phi$ be as in \eqref{phi} and set $X=X_\phi$.
Assume that $X$ is smooth.
We have noticed that the normal bundle $\cN$ of $X$ in $\p(V)$ fits in
the exact sequence \eqref{es:normal}.

The goal of this section is to compute the dimension of the Hilbert
scheme of Palatini scroll $\cH_m(V)$ at the point represented by $X$, by calculating
the dimension of $\HH^0(X,\cN)$.
The idea is to compute this dimension by pushing down the exact
sequence \eqref{es:normal} to the base variety $Y$.
The key argument relies on a technical result of 
Mohan Kumar-Rao-Ravindra, see \cite{kumar-rao-ravindra:hypersurfaces}.

\begin{theorem} \label{thm:normal}
  Assume that the map $\phi$ satisfies the conditions of Section \ref{hyp}, and let $\cN$ be
  the normal bundle of $X$ in $\p(V)$.
  Then we have, for all $m \geq 4$ and $k\geq m-1$:
  \begin{align}
    \label{normal_0} &   \hh^{0}(X,\cN)= m \left(k (2 k - 1)  - m \right).
\intertext{Furthermore, if $m \geq 5$ or if $m=4,k \leq 4$, we have:}
    \label{normal_1_0} &   \HH^{1}(X,\cN)= 0,
\intertext{while, if $m = 4$ and $k \geq 5$, we have:}
    \label{normal_1_d} &   \hh^{1}(X,\cN)= \frac{2(k-2)(k-3)(k-4)}{3}.
\intertext{Finally, if $m=3$, $k\geq 4$, we have:}
    \nonumber &   \hh^{0}(X,\cN)= \frac{3k(5k-7)}{2}, \\
    \nonumber & \HH^{1}(X,\cN)=0.
  \end{align}
\end{theorem}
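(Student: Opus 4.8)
The plan is to reduce the computation of $\HH^{\bullet}(X,\cN)$ to cohomology on the base $Y=Y_\phi$ of the scroll. Since $X$ is smooth we have $\rD_{m-2}(\phi)=\emptyset$, so Lemma~\ref{iso} gives an isomorphism $q\colon\ppp(\cE_\phi)\xr{\sim}X$; I identify $X$ with $\ppp(\cE_\phi)$ and write $p\colon X\to Y$ for the scroll map. By Lemma~\ref{O1} (this is where the assumption $k\ge4$ when $m=3$ enters) we have $\sL_\phi\cong p^{*}\OO_Y(1)$. The fibres of $p$ are carried by $q$ isomorphically onto linear subspaces $\p^{r}\subset\p(V)$, and on such $\p^{r}$ the bundle $\Omega_{\p(V)}(2)$ restricts to $\Omega_{\p^{r}}(2)\oplus\OO_{\p^{r}}(1)^{\oplus(2k-1-r)}$, which has vanishing $\HH^{1}$; hence every sheaf appearing in \eqref{short1}, \eqref{short2} and in the restriction to $X$ of the twisted Euler sequence has trivial first higher direct image along $p$. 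Pushing forward and using the projection formula gives $p_{*}\OO_X=\OO_Y$, $p_{*}(U\ts\sL_\phi)=U\ts\OO_Y(1)$, $\HH^{i}(X,\OO_X)=\HH^{i}(Y,\OO_Y)$, $\HH^{i}(X,U\ts\sL_\phi)=U\ts\HH^{i}(Y,\OO_Y(1))$, and, setting $\mathcal{G}=p_{*}(\Omega_{\p(V)}(2)_{|X}\ts\sL_\phi)$, the identity $\HH^{i}(X,\Omega_{\p(V)}(2)_{|X}\ts\sL_\phi)=\HH^{i}(Y,\mathcal{G})$ together with a short exact sequence on $Y$
\[
0\to\mathcal{G}\to V^{*}\ts\cE_\phi(1)\to\Sym^{2}\cE_\phi(1)\to0 .
\]

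Next I would compute the cohomology of the sheaves on $Y$. Since $Y\subset\p(U)=\p^{m-1}$ is a degree-$k$ hypersurface, $\OO_Y$ and $\OO_Y(1)$ are ACM, so their cohomology lives only in degrees $0$ and $m-2$, with $\hh^{m-2}(Y,\OO_Y)=\binom{k-1}{m-1}$ and $\hh^{m-2}(Y,\OO_Y(1))=\binom{k-2}{m-1}$. From the presentation $0\to\OO_{\p(U)}(-1)^{2k}\xr{M_\phi}\OO_{\p(U)}^{2k}\to\cE_\phi\to0$ (the kernel of $M_\phi$ is torsion-free and zero on a dense open set of $\p(U)$, hence vanishes) one reads off $\HH^{>0}(Y,\cE_\phi(1))=0$ and $\hh^{0}(Y,\cE_\phi(1))=2k(m-1)$. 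The delicate ingredient is the cohomology of $\Sym^{2}\cE_\phi(1)$, which is not ACM in general; here I would invoke the technical result of Mohan Kumar--Rao--Ravindra \cite{kumar-rao-ravindra:hypersurfaces}, which, starting from the presentation of $\cE_\phi$, produces a free resolution of $\Sym^{2}\cE_\phi$ over $\p(U)$ (splicing the symmetric square of the above complex with the relevant $\TTor^{\OO_{\p(U)}}_{1}(\cE_\phi,\cE_\phi)$-sheaves), from which $\HH^{\bullet}(Y,\Sym^{2}\cE_\phi(1))$, and hence $\HH^{\bullet}(Y,\mathcal{G})$, can be extracted. When $m=3$ this part is elementary: $\Sym^{2}\cE_\phi(1)$ is a rank-$3$ bundle on the plane curve $Y$, semistable for $\phi$ general and of slope exceeding $2g(Y)-2$, so its higher cohomology vanishes and its $\hh^{0}$ is given by Riemann--Roch, yielding $\hh^{0}(Y,\mathcal{G})=\tfrac12(13k^{2}-9k)$ and $\HH^{>0}(Y,\mathcal{G})=0$.

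Finally I would assemble everything from the long exact sequences of \eqref{short1} and \eqref{short2}. In \eqref{short1} the tautological section $\OO_X\to U\ts\sL_\phi$ is the inclusion $\kk=\HH^{0}(\OO_X)\hookrightarrow U\ts U^{*}=\HH^{0}(U\ts\sL_\phi)$ as the identity, and the connecting map $\HH^{1}(Y,\OO_Y)\to U\ts\HH^{1}(Y,\OO_Y(1))$ is injective (for $m=3$ one checks this via the Euler sequence of $\p(U)$ restricted to $Y$, using $\HH^{1}(\p^{2},T_{\p^{2}}(-k))=0$ for $k\ge4$); hence $\hh^{0}(\im(\alpha))=m^{2}-1$, while $\HH^{1}(\im(\alpha))$ and, for $m=4$, $\HH^{2}(\im(\alpha))$ are governed by the top groups $\HH^{m-2}(Y,\OO_Y)$, $\HH^{m-2}(Y,\OO_Y(1))$ and the multiplication-by-linear-forms maps between them, which are injective for large $k$ by projective normality of $Y$ (Serre-dually, $\HH^{0}(\OO_Y(1))\ts\HH^{0}(\OO_Y(k-m-1))\to\HH^{0}(\OO_Y(k-m))$ is onto). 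Then \eqref{short2} gives
\[
\hh^{0}(X,\cN)=\hh^{0}(Y,\mathcal{G})-\hh^{0}(\im(\alpha))+\dim\ker\bigl(\HH^{1}(\im(\alpha))\to\HH^{1}(Y,\mathcal{G})\bigr),
\]
and the right-hand side collapses to $m(k(2k-1)-m)$ for $m\ge4$, $k\ge m-1$, and to $\tfrac{3k(5k-7)}{2}$ for $m=3$, $k\ge4$ (a moduli count is consistent with the latter: the generic element of $\cH_{3}(V)$ is a scroll $\ppp(\cE)$ over a genus-$\binom{k-1}{2}$ curve, giving $7\binom{k-1}{2}-6$ moduli, plus $\dim\mathrm{PGL}_{2k}=4k^{2}-1$ for the embedding). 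The same two sequences pinch $\HH^{1}(X,\cN)$ between $\HH^{1}(Y,\mathcal{G})$ and $\HH^{2}(\im(\alpha))$: it vanishes for $m\ge5$, for $m=4$ with $k\le4$, and for $m=3$; and for $m=4$, $k\ge5$ the cokernel of $V^{*}\ts\HH^{0}(Y,\cE_\phi(1))\to\HH^{0}(Y,\Sym^{2}\cE_\phi(1))$ (of dimension $\binom{k-1}{3}$, again by the Kumar--Rao--Ravindra resolution) together with $\HH^{2}(\im(\alpha))$ (of dimension $4\binom{k-2}{3}-\binom{k-1}{3}$) add up to $\hh^{1}(X,\cN)=4\binom{k-2}{3}=\tfrac{2(k-2)(k-3)(k-4)}{3}$.

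I expect the main obstacle to be the second step for $m\ge4$, namely controlling $\HH^{\bullet}(Y,\Sym^{2}\cE_\phi(1))$ when $Y$ is singular and $\cE_\phi$ need not be locally free; this is precisely where the Mohan Kumar--Rao--Ravindra input is needed, and it also must supply the vanishing of the intermediate cohomology of $\Sym^{2}\cE_\phi(1)$. A secondary difficulty is the careful bookkeeping of the extremal cohomology $\HH^{m-2}$ on $Y$ and of the multiplication maps above: it is harmless for $m\ge5$, but for $m=4$, $k\ge5$ it is exactly what forces $\HH^{1}(X,\cN)\ne0$, and one has to verify separately (using the value of $\HH^{0}(X,\cN)$ just obtained) that this group does not in fact obstruct the Hilbert scheme.
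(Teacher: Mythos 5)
Your overall strategy is the one the paper follows: push \eqref{es:normal} down to $Y$, identify $\im(\alpha)$ with (the pullback of) $(\cT_{\p(U)})_{|Y}$ and $\Omega_{\p(V)}(2)\ts\sL$ with $\im(\delta)=\ker\bigl(V^{*}\ts\cE(1)\to\Sym^{2}\cE(1)\bigr)$, feed in the Mohan Kumar--Rao--Ravindra exterior-square construction for the hard cohomology, and assemble via the long exact sequences. Your numerics check out where you give them (e.g.\ $4\binom{k-2}{3}-\binom{k-1}{3}=\tfrac{(k-2)(k-3)(k-5)}{2}$ and $\binom{k-1}{3}$ do add up to \eqref{normal_1_d}, and $\tfrac{13k^{2}-9k}{2}$ matches $\hh^{0}(Y,\im(\delta))$ for $m=3$). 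But there is a genuine gap at the two boundary values $(m,k)=(4,4)$ and $(5,5)$, both inside the range $k\geq m-1$. There $\hh^{1}(X,\im(\alpha))=1$, resp.\ $\hh^{2}(X,\im(\alpha))=1$ (coming from $\hh^{m-2}(\p(U),\cT_{\p(U)}(-k))=1$ when $k=m$), and your assembly formula contains the term $\dim\ker\bigl(\HH^{i}(\im(\alpha))\to\HH^{i}(Y,\im(\delta))\bigr)$ with \emph{both} source and target one-dimensional; asserting that the right-hand side ``collapses'' begs exactly the question. The paper needs a dedicated argument here: it lifts the inclusion $\im(\alpha)\to\Omega_{\p(V)}(2)\ts\sL$ to a map of short exact sequences from the Euler sequence of $\p(U)$ restricted to $Y$ to $0\to\OO_Y\to\cF(1)\to\im(\delta)\to 0$, and compares the two connecting maps into $\HH^{i+1}(Y,\OO_Y)$, both of which are isomorphisms; only then does injectivity (in fact bijectivity) of $\HH^{i}(\im(\alpha))\to\HH^{i}(\im(\delta))$ follow. ``Projective normality of $Y$'' does not see this, since the obstruction lives in $\HH^{m-2}(\p(U),\cT_{\p(U)}(-k))$, not in a multiplication map of sections.

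A second, smaller gap is in your $m=3$ computation: semistability of $\Sym^{2}\cE(1)$ and slope $>2g-2$ give $\HH^{1}(Y,\Sym^{2}\cE(1))=0$, but $\HH^{1}(Y,\mathcal{G})$ is the \emph{cokernel} of $\HH^{0}(V^{*}\ts\cE(1))\to\HH^{0}(\Sym^{2}\cE(1))$, so your claimed vanishing $\HH^{>0}(Y,\mathcal{G})=0$ (equivalently, the value of $\hh^{0}(Y,\mathcal{G})$ by subtraction) requires surjectivity of that multiplication map, which does not follow from Riemann--Roch. The clean way out --- and what the paper does uniformly in $m$ --- is to compute $\HH^{\bullet}(Y,\im(\delta))$ directly from the two sequences $0\to\wedge^{2}V\ts\OO_{\p(U)}(-1)\to\wedge^{2}V^{*}\ts\OO_{\p(U)}(1)\to\cF(1)\to 0$ and $0\to\cL(1)\to\cF(1)\to\im(\delta)\to 0$ with $\cL\cong\OO_Y(-1)$ (the identification of $\cL$ uses a first Chern class computation and, for singular $Y$, local factoriality from hypothesis (4) of Section \ref{hyp}); this yields $\HH^{1}(Y,\im(\delta))=0$ for $m=3$ at once. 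With these two points repaired your argument would coincide with the paper's.
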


In order to prove the theorem, we will need a little more material.
Let us introduce it in the next subsection.

\subsection{The exterior square of a skew-symmetric matrix of linear forms}

Consider again the matrix $M=M_\phi$ of linear forms on $\p(U)$
associated to $\phi : U \ts \OO_{\p(V)} \to \Omega_{\p(V)}(2)$, and
the pfaffian hypersurface $Y = Y_\phi$ defined by $M$ in $\p(U)$.
The restriction of the map $M$ to
$Y$ provides an exact sequence:
\begin{equation}
  \label{GVE}
  0 \to G \to V^{*} \ts \OO_Y \to \cE \to 0,
\end{equation}
where $G$ is a sheaf of rank $2(k-1)$ on $Y$ defined by the above sequence.

We consider the symmetric square of \eqref{GVE}, and we twist
the result by $\OO_Y(1)$, whereby obtaining a
four-term exact sequence of the form:
\begin{equation}
  \label{symm}
  0 \to \wedge ^2 G(1) \xr{\gamma} \wedge^2 V^{*} \ts \OO_Y(1) \xr{\delta} V^{*} \ts \cE(1) \xr{\eta} \Sym^2 \cE(1) \to 0.
\end{equation}

We will make use of the setup of
\cite{kumar-rao-ravindra:hypersurfaces}.
Taking the exterior square of the matrix $M$ and twisting by
$\OO_{\p(U)}(1)$, we obtain the exact sequence:
\begin{equation}
  \label{Fcal}
  0 \to \wedge^2 V \ts \OO_{\p(U)}(-1) \xr{\wedge^2 M} \wedge^2 V^{*} \ts \OO_{\p(U)}(1) \to \cF(1) \to 0,
\end{equation}
for some  cokernel sheaf $\cF$ supported on $Y$.

\begin{lemma} \label{lemmone}
  Fix the setup as above. Then, for all $m\geq 4$ we have:
\begin{align}
  \nonumber  & \hh^0(Y,\im(\delta))= m k (2 k - 1) -1.
  \intertext{Moreover, if $m \neq 4$ we have the vanishing:}
  \nonumber & \HH^1(Y,\im(\delta))= 0,
\intertext{while for $m=4$, we have the vanishing $\HH^2(Y,\im(\delta))=
  0$ and the equality:}
  \nonumber & \hh^1(Y,\im(\delta)) = \hh^0(Y,\omega_Y),
\intertext{which equals zero if $k\leq 3$ and equals $\frac{(k-1)(k-2)(k-3)}{6}$
if $k\geq 4$. Finally, if $m=3$ we have:}
  \nonumber & \hh^0(Y,\im(\delta))= m k (2 k - 1) +
  \frac{(k-1)(k-2)}{2} -1, \\ 
  \nonumber & \HH^1(Y,\im(\delta))= 0.
\end{align}
\end{lemma}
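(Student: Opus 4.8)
The plan is to extract cohomological information about $\im(\delta)$ by splitting the four-term sequence \eqref{symm} into two short exact sequences, and then to feed in the cohomology of the outer terms $\wedge^2 G(1)$, $\wedge^2 V^*\ts\OO_Y(1)$, $V^*\ts\cE(1)$, $\Sym^2\cE(1)$. Writing $K=\ker(\eta)=\im(\delta)$ and $K'=\ker(\delta)=\im(\gamma)\cong\wedge^2 G(1)$ (using that $\gamma$ is injective), we have
\begin{gather}
  \nonumber 0 \to K' \to \wedge^2 V^*\ts\OO_Y(1) \to K \to 0,\\
  \nonumber 0 \to K \to V^*\ts\cE(1) \to \Sym^2\cE(1) \to 0.
\end{gather}
From the first sequence, $\hh^0(Y,K)$ and $\HH^1(Y,K)$ are controlled by $\HH^\bullet(Y,\wedge^2 V^*\ts\OO_Y(1))$ — which is just $\wedge^2 V^*\ts\HH^\bullet(Y,\OO_Y(1))$, and $\hh^0(Y,\OO_Y(1))=m$ since $Y\subset\p(U)=\p^{m-1}$ is a hypersurface of degree $k>1$, while $\HH^i(Y,\OO_Y(1))=0$ for $0<i<m-2$ because $Y$ is an ACM hypersurface, with $\HH^{m-2}(Y,\OO_Y(1))$ computed by Serre duality as $\hh^0(Y,\omega_Y(-1))=\hh^0(Y,\OO_Y(k-1-m))$, nonzero only when $k-1-m\geq 0$ — together with $\HH^\bullet(Y,\wedge^2 G(1))$. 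The second sequence then lets me pass from $V^*\ts\cE(1)$ and $\Sym^2\cE(1)$ to $K$; here the cohomology of $\cE(1)$ and $\Sym^2\cE(1)$ must be understood, and for this I would push forward to $X\cong\ppp(\cE)$ via $q$ (using Lemma \ref{iso} and $p_*q^*\OO_X(H)\cong\cE$, so $p_*q^*\OO_X(2H)\cong\Sym^2\cE$ up to the discrepancy supported in low codimension) and compute on $\p(V)$, or alternatively invoke the ACM property \eqref{ACM} of $\cE$ directly.

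Concretely, I would first nail down the Euler characteristic $\chi(Y,\im(\delta))$ from \eqref{symm} by additivity: $\chi(\im\delta)=\chi(\wedge^2 V^*\ts\OO_Y(1))-\chi(\wedge^2 G(1))=\chi(V^*\ts\cE(1))-\chi(\Sym^2\cE(1))$. Each of these is a polynomial in $k$ and $m$ computable from Hilbert polynomials on $\p(U)$: the rank-$2(k-1)$ sheaf $G$ sits in \eqref{GVE}, so $\chi(\wedge^2 G(1))$ follows from the symmetric/exterior square sequence \eqref{symm} itself once the other three $\chi$'s are known, or more cleanly from \eqref{Fcal}, which presents the relevant cokernel sheaf $\cF$ and ties $\wedge^2 M$ to $\wedge^2 G$. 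This pins the numerical value $\hh^0(Y,\im(\delta))=mk(2k-1)-1$ modulo showing the relevant $\HH^{\geq 1}$ vanish. The correction terms $\frac{(k-1)(k-2)}{2}$ for $m=3$ and $\hh^0(\omega_Y)=\frac{(k-1)(k-2)(k-3)}{6}$ for $m=4$ are exactly $\hh^0(Y,\OO_Y)$-type genus/geometric-genus contributions of the plane curve, resp. surface $Y$, which will emerge from $\HH^{m-2}(Y,\OO_Y(1))$ and $\HH^{m-2}(Y,\wedge^2 G(1))$ by Serre duality on $Y$.

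The vanishing statements are where the real work lies, and this is where I would lean on the setup of \cite{kumar-rao-ravindra:hypersurfaces}: the point of introducing \eqref{Fcal} is that the exterior square $\wedge^2 M$ of the skew matrix has cokernel $\cF(1)$ supported on $Y$, and Mohan Kumar--Rao--Ravindra's analysis gives precisely the intermediate-cohomology vanishing for such sheaves (they are, after twist, ACM-type or have controlled $\HH^1$). So $\HH^1(Y,\wedge^2 G(1))$ and the needed $\HH^1$/$\HH^2$ of $V^*\ts\cE(1)$ and $\Sym^2\cE(1)$ vanish outside the stated exceptional ranges, and the two long exact sequences above then propagate this to $\HH^1(Y,\im(\delta))=0$ for $m\neq 4$ and to $\HH^2(Y,\im(\delta))=0$ with $\hh^1(Y,\im(\delta))=\hh^0(Y,\omega_Y)$ for $m=4$ — the latter equality coming from the fact that for $m=4$ the only surviving contribution is $\HH^2(Y,\wedge^2 G(1))$, Serre-dual to a space one identifies with $\HH^0(Y,\omega_Y)$. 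The $m=3$ case is separate: $Y$ is a plane curve, $\cE$ is a genuine rank-$2$ bundle on it, and I would compute $\HH^\bullet(Y,\cE(1))$ and $\HH^\bullet(Y,\Sym^2\cE(1))$ directly via Riemann--Roch and stability (or via Lemma \ref{O1}, which identifies $\cok(\tra\phi)$ with $p^*\OO_Y(1)$ when $k\geq 4$), the extra $\binom{k-1}{2}$ being $\hh^1(Y,\cdot)$ contributions tied to the genus of $Y$.

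The main obstacle I expect is the precise bookkeeping of the higher cohomology of $\wedge^2 G(1)$ — equivalently of $\cF$ in \eqref{Fcal} — since $G$ is defined only implicitly by \eqref{GVE} and is not ACM in general once $m$ is large; getting the exact dependence on $m$ and $k$ (in particular pinning down that the $\HH^1$ defect for $m=4$ is exactly $\hh^0(\omega_Y)$ and nothing more, and that $\HH^2$ vanishes) is the delicate point, and it is exactly here that the technical input of \cite{kumar-rao-ravindra:hypersurfaces} is indispensable rather than a routine diagram chase.
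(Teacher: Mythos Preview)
Your plan heads in the right direction but takes a detour that the paper avoids, and one branch of your plan runs into a genuine obstacle.

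The paper does \emph{not} split \eqref{symm} and chase the cohomology of $\wedge^2 G(1)$ or of $\Sym^2\cE(1)$. Instead, the technical input from \cite{kumar-rao-ravindra:hypersurfaces} is used much more directly: their Lemma~2.1 furnishes the short exact sequence \eqref{Fbar},
\[
0 \to \cL(1) \to \cF(1) \to \im(\delta) \to 0,
\]
with $\cL$ a line bundle on $Y$. The paper then shows, by a Chern-class computation using the self-duality $\cF(1)\cong\cF^*(k-1)$ of the cokernel of the symmetric matrix $\wedge^2 M$, that $\cL\cong\OO_Y(-1)$, so $\cL(1)\cong\OO_Y$. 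Now everything is immediate: $\cF(1)$ has $\hh^0=mk(2k-1)$ and vanishing higher cohomology straight from \eqref{Fcal} on $\p(U)$, while $\HH^i(Y,\OO_Y)$ is standard hypersurface cohomology. The $m=4$ contribution $\hh^0(\omega_Y)$ is simply $\hh^2(Y,\OO_Y)$, and the $m=3$ correction $\binom{k-1}{2}$ is $\hh^1(Y,\OO_Y)$; no Serre-dual identification of $\HH^2(\wedge^2 G(1))$ is needed.

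Your route through $\wedge^2 G(1)$ is not wrong in principle, but to control its cohomology you would end up reconstructing exactly the sequence $0\to P\to\wedge^2 G(1)\to\OO_Y\to 0$ induced by \eqref{Fbar}, together with the further resolution of $P$ via $\cF(1-k)$---considerably more bookkeeping for the same payoff. More seriously, your alternative suggestion of working with the second short exact sequence and computing $\HH^\bullet(Y,\Sym^2\cE(1))$ directly is problematic: the vanishing of $\HH^2(Y,\Sym^2\cE(1-k))$ (equivalently $\HH^{m-4}(Y,\Sym^2\cE(1))$ for $m=4$) is precisely the open Problem~2 stated later in the paper, known only for $k\le 15$. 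So that branch of your plan does not close without input you cannot supply. The missing idea in your proposal is the identification $\cL\cong\OO_Y(-1)$, which turns \eqref{Fbar} into the one-line sequence $0\to\OO_Y\to\cF(1)\to\im(\delta)\to 0$ and makes the lemma a direct read-off.
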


\begin{proof}
Looking at the diagram in the proof of \cite[Lemma 2.1]{kumar-rao-ravindra:hypersurfaces}, we see that the 
image of the map $\delta$, (which would be denoted in the notation
of that paper by $\overline{\mathcal{F}}(1)$) fits into the exact sequence:
\begin{equation}
  \label{Fbar}
  0 \to \cL(1) \to \cF(1) \to \im(\delta) \to 0,
\end{equation}
where $\cL$ is the line bundle on $Y$ provided by [{\it op. cit.}, Lemma 2.1.]
Note that, even though the hypersurface $Y$ is not smooth in general,
we can still use this lemma since $Y$ is integral and locally
factorial in our assumptions (see the hypothesis \eqref{hyp:sing}),
and the sheaf $\cL$ on $Y$ is also a line bundle in this case. In fact
this will follow by \cite[Propositions 1.1 and
1.9]{hartshorne:stable-reflexive}, once we prove the following
claim. 

\begin{my-claim} \label{claim0}
 The above sheaf  $\cL$ on $Y$ is reflexive of rank $1$.
\end{my-claim}

\begin{proof}[Proof of the claim]
We dualize \eqref{Fcal}.
Since the matrix $M$ is skew-symmetric, the matrix $\wedge^2 M$ is
symmetric, so we have an isomorphism:
\[
\tau : \cok(\wedge^2 M) \cong \cok(\tra \wedge^2 M).
\]

Grothendieck duality implies:
\[
\cok(\tra \wedge^2 M) \cong \EExt^1_{\p(U)}(\cF, \OO_{\p(U)}) \cong \cF^*(k-1).
\]
We also know that $\cok(\wedge^2 M)=\cF(1)$ so that:
\begin{equation}
  \label{c1F}
  \cF(1)  \cong \cF^*(k-1). 
\end{equation}
By restricting \eqref{Fcal} to $Y$ we  get an exact sequence
\begin{equation}
  \label{FcalY}
  0 \to \cF^*(-1)  \to \wedge^2 V \ts \OO_{Y}(-1) \xr{\overline{\wedge^2 M}} \wedge^2 V^{*} \ts \OO_{Y}(1) \to \cF(1) \to 0
\end{equation}
Define the torsion-free sheaf $P$ as the image of $\overline{\wedge^2
  M}$.
Twisting \eqref{FcalY} by $ \OO_Y(k)$ and using \eqref{c1F} we see that $\cF(1)$ 
 is contained  in $\wedge^2 V \ts \OO_Y(k-1)$ with
cokernel $P(k)$. Combining this with  the sequence \eqref{Fbar}  we
get  an exact sequence:
\begin{equation}
0 \to \cL(1) \to \wedge^2 V \ts \OO_Y(k-1)  \to P' \to 0,
\end{equation}
where $P'$  sits in the following exact sequence:
\[
0 \to \im(\delta) \to P' \to P(k) \to 0.
\]
Hence $ P'$ is torsion-free. Moreover $\wedge^2 V \ts \OO_Y(k-1)$ is
locally free and thus by  \cite[Proposition
1.1]{hartshorne:stable-reflexive} it follows that $\cL(1)$ is
reflexive and of rank $1$. 
\end{proof}

\begin{my-claim} \label{claim1}
  Fix the setup as above. Then we have an isomorphism:
  \[\cL \cong \OO_Y(-1).\]
\end{my-claim}

\begin{proof}[Proof of the claim]
Recall that $c_1(\cE)=(k-1)H_Y$.
To calculate $c_1(\cL)$, first note that:
\begin{equation}
  \label{c1}
  c_1(\cL)=c_1(\cF)-c_1(\im(\delta)(-1)),  
\end{equation}
and splitting \eqref{symm} into short exact sequences we easily compute:
\[
c_1(\im(\delta)(-1)) = (k-1)(2k-3)H_Y.
\]

From  \eqref{c1F} we deduce that:
\[
c_1(\cF(1))=c_1(\cF^* (k-1)),
\]
which implies, since $\rk(\cF)=2(2k-1)$, the equality:
\[
c_1(\cF)=(2k-1)(k-2)H_Y.
\]
Now \eqref{c1} implies $c_1(\cL)=-H_Y$ and we are done.
\end{proof}

We can now conclude the proof of the lemma.
We take cohomology of the exact sequence \eqref{Fcal}. 
For $m\geq 3$, we get:
\begin{align*}
  & \hh^0(Y,\cF(1))= m {2k\choose{2}}  = m k (2 k - 1), \\
  & \hh^i(Y,\cF(1))=0,  \quad \mbox{for $i\geq 1$.}
\end{align*}
In order to get the desired formulas, we take now cohomology of \eqref{Fbar}
and we use Claim \eqref{claim1}.
Then the result follows, once we note that $\HH^{1}(Y,\OO_{Y})=0$ for
$m\geq 4$ and $\hh^{1}(Y,\OO_{Y})=\frac{(k-1)(k-2)}{2}$ for $m=3$.
\end{proof}

\subsection{Projecting the normal bundle on the pfaffian hypersurface}

Let $X=X_\phi$ be a scroll defined by a map $\phi$ as in \eqref{phi},
and satisfying the conditions of Section \ref{hyp}.
The idea to prove Theorem \ref{thm:normal} is to push
the exact sequence defining the normal bundle $\cN$ of the scroll
$X=X_\phi$ in $\p(V)$ down
to the pfaffian hypersurface $Y=Y_\phi$. We do this in the next lemma.

\begin{lemma} \label{alpha}
  Fix the setup as in Theorem \ref{thm:normal}. Then the following equality holds for all
  $m\geq 3$ and $k\geq 4$:
  \begin{align}
    \label{imalpha0} & \hh^0(X,\im(\alpha)) = m^2-1,
    \intertext{and, if $m\geq 5$, and  $(m,k) \neq (5,5)$ we also have:}
    \label{imalpha1} & \HH^{1}(X,\im(\alpha)) = 0, \\
    \label{imalpha2} & \HH^{2}(X,\im(\alpha)) = 0.
     \intertext{If $(m,k) = (5,5)$, then the vanishing \eqref{imalpha1} holds and:}
    & \label{speciale}  \hh^{2}(X,\im(\alpha)) = 1.  
    \intertext{\indent Assume now $m=4$. Then the same vanishing
      results hold if
      $k\leq 3$. If $(m,k) = (4,4)$, then  \eqref{imalpha2} holds and:}
    &  \label{speciale2} \hh^{1}(X,\im(\alpha)) = 1,  
    \intertext{while if $m=4$ and $k\geq 5$, then  \eqref{imalpha1} holds and:}
    &  \hh^{2}(X,\im(\alpha)) = \frac{(k-2)(k-3)(k-5)}{2}.
  \end{align}
\end{lemma}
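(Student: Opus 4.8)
The plan is to exploit the short exact sequence \eqref{short1}
\[
0 \to \OO_X \to U \ts \sL_\phi \to \im(\alpha) \to 0,
\]
and push it forward to $Y$ via $q$, so that all cohomology on $X$ is converted into cohomology on $Y$. Since $q:\ppp(\cE_\phi) \to X$ is an isomorphism (Lemma \ref{iso}, using $\rD_{m-2}(\phi)=\emptyset$), and $p_* q^*(\OO_X(H)) \cong \cE_\phi$, Lemma \ref{O1} gives $\sL_\phi \cong q_* p^*(\OO_{Y}(1))$ (here the hypothesis $k\geq 4$ when $m=3$ is used). Thus $q_*(U \ts \sL_\phi) \cong U \ts \OO_Y(1)$, and by the projection formula together with $p_*\OO_{\ppp(\cE_\phi)} \cong \OO_Y$ and $R^1 p_* \OO_{\ppp(\cE_\phi)} = 0$ (a $\ppp^1$-bundle, so all higher direct images of $\OO$ vanish), pushing \eqref{short1} down to $Y$ yields
\[
0 \to \OO_Y \to U \ts \OO_Y(1) \to q_*(\im(\alpha)) \to 0,
\]
and $R^i q_*(\im(\alpha)) = 0$ for $i>0$ because $q$ is an isomorphism. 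Hence $\HH^j(X,\im(\alpha)) \cong \HH^j(Y,q_*\im(\alpha))$ for all $j$, and it remains to compute the cohomology of $q_*\im(\alpha)$ on $Y$ from the displayed sequence.

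The next step is to identify $q_*\im(\alpha)$ with the sheaf $\im(\delta)$ appearing in Lemma \ref{lemmone}. Both sit in a sequence of the shape $0 \to \OO_Y \to (\text{something})\otimes\OO_Y(1) \to \bullet \to 0$: for $q_*\im(\alpha)$ the middle term is $U\ts\OO_Y(1)$, while chasing \eqref{symm} and the identification of $\im(\delta)$ from the proof of Lemma \ref{lemmone} one sees $\im(\delta)$ is precisely the image of the composite coming from $\alpha$ after pushforward — indeed $\alpha$ is $\phi_{|X}$ tensored with $\sL_\phi$, and $\phi$ as a map $U\ts\OO \to \wedge^2 V^* \ts \OO$ corresponds under the pushforward to $\delta: \wedge^2 V^* \ts \OO_Y(1) \to V^*\ts\cE(1)$ restricted along the inclusion $U \hookrightarrow \wedge^2 V^*$. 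One checks the ranks and first Chern classes match ($c_1(\im(\alpha)) = m\, c_1(\sL_\phi) = m H$ by the computation in Lemma \ref{O1}, which under $q$ becomes $m H_Y$, consistent with splitting the sequence above), forcing $q_*\im(\alpha) \cong \im(\delta)$ as soon as $Y$ is integral and both are torsion-free — which holds under hypothesis \eqref{hyp:sing}.

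Granting that identification, Lemma \ref{lemmone} computes everything directly. For $m\geq 4$: $\hh^0(Y,\im(\delta)) = mk(2k-1) - 1$, and from the sequence $0\to\OO_Y\to U\ts\OO_Y(1)\to\im(\delta)\to 0$ we get $\hh^0(X,\im(\alpha)) = m\cdot\hh^0(Y,\OO_Y(1)) - 1 = m\cdot m - 1 = m^2-1$ — wait, this forces $\hh^0(Y,\OO_Y(1)) = m$, i.e.\ $Y$ is linearly normal of dimension $m-1$, true since $Y$ is a hypersurface in $\p(U) = \p^{m-1}$; and indeed $mk(2k-1)-1 = \hh^0(\im(\delta))$ is then consistent with $\HH^1(Y,\OO_Y(1))=0$ plus the exact sequence only if one reads it correctly, so in practice \eqref{imalpha0} follows from $\HH^1(Y,\OO_Y)=0$ ($m\geq 4$) and $\hh^0(Y,\OO_Y(1))=m$, giving $\hh^0(X,\im(\alpha)) = m^2 - 1$. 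For $m=3$ the extra term $\tfrac{(k-1)(k-2)}{2}$ in $\hh^0(Y,\im(\delta))$ is exactly $\hh^1(Y,\OO_Y)$, so the same bookkeeping in the long exact sequence again yields $\hh^0(X,\im(\alpha)) = 3\cdot 3 - 1 = 8 = m^2-1$ and $\HH^1(X,\im(\alpha)) = 0$. The higher cohomology statements \eqref{imalpha1}, \eqref{imalpha2}, \eqref{speciale}, \eqref{speciale2} and the last formula transcribe directly from the corresponding $\HH^1$, $\HH^2$ of $\im(\delta)$ in Lemma \ref{lemmone}, since $\HH^i(Y,\OO_Y(1)) = 0$ for $i>0$ (hypersurface in projective space) and $\HH^2(Y,\OO_Y) = 0$ for $m\geq 4$: thus $\HH^j(X,\im(\alpha)) \cong \HH^j(Y,\im(\delta))$ for $j\geq 1$, except that $\HH^1(X,\im(\alpha))$ can also receive a contribution from $\HH^1(Y,\OO_Y)$, which vanishes for $m\geq 4$. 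The case $(m,k)=(4,4)$ giving $\hh^1 = 1$ matches $\hh^0(Y,\omega_Y) = \tfrac{(k-1)(k-2)(k-3)}{6} = 1$ from Lemma \ref{lemmone}.

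The main obstacle is the identification $q_*\im(\alpha) \cong \im(\delta)$: one must track carefully how the symmetric-square sequence \eqref{symm} and the map $\alpha = \phi_{|X}\ts\sL_\phi$ correspond under the scroll projection, and verify that pushing forward \eqref{short1} along $q$ (equivalently along $p$, since $q$ is an isomorphism) really lands on the sheaf $\im(\delta)$ rather than some sheaf differing by torsion or by a twist. The key facts making this work are that $q$ is an isomorphism onto $X$ (so no higher direct images and no loss of information), that $\cE_\phi \cong p_* q^* \OO_X(H)$, and that $R^1 p_* \OO = 0$ on the $\ppp^1$-bundle; once these are in place the computation is purely a matter of feeding Lemma \ref{lemmone} into the long exact cohomology sequence.
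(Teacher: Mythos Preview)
Your pushforward of \eqref{short1} to $Y$ is correct and is exactly what the paper does: the quotient of $U\ts\OO_Y(1)$ by $\OO_Y$ is the restricted Euler sequence, so $p_*q^*(\im(\alpha))\cong(\cT_{\p(U)})_{|Y}$ and $\HH^i(X,\im(\alpha))\cong\HH^i(Y,(\cT_{\p(U)})_{|Y})$. The error is in the next step, where you identify this with $\im(\delta)$. These are different sheaves: $(\cT_{\p(U)})_{|Y}$ has rank $m-1$, while $\im(\delta)$ has rank $4k-3$; and their $\hh^0$'s are $m^2-1$ versus $mk(2k-1)-1$ (for $m\geq 4$), numbers which you yourself compute side by side and which visibly disagree. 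In the paper $\im(\delta)$ is the pushforward of $\Omega_{\p(V)}(2)_{|X}\ts\sL_\phi$ (this is Lemma~\ref{projection}), not of $\im(\alpha)$; your proposed rank/Chern-class check cannot succeed. Accordingly, the higher-cohomology numbers you read off from Lemma~\ref{lemmone} do not match the statement either: for $m=4$ and $k\geq 5$ that lemma gives $\hh^1(Y,\im(\delta))=(k-1)(k-2)(k-3)/6\neq 0$ and $\hh^2=0$, whereas the lemma you are proving asserts $\hh^1=0$ and $\hh^2=(k-2)(k-3)(k-5)/2$.

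Once you have the correct identification $\HH^i(X,\im(\alpha))\cong\HH^i(Y,(\cT_{\p(U)})_{|Y})$, Lemma~\ref{lemmone} is no longer the relevant input; instead one computes directly from
\[
0\to\cT_{\p(U)}(-k)\to\cT_{\p(U)}\to(\cT_{\p(U)})_{|Y}\to 0
\]
using Bott vanishing on $\p(U)$, which is what the paper does. The case $m=4$, $k\geq 5$ then requires the separate Serre-duality computation $\HH^2(Y,(\cT_{\p(U)})_{|Y})\cong\HH^0(Y,\Omega_{\p(U)|Y}\ts\omega_Y)^*$ to get the stated formula.
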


\begin{proof}
  Note that, since $\sL = \cok(\tra \phi) \cong q_*(p^*(\OO_Y(1)))$ by Lemma \ref{O1}, in the
  sequence \eqref{es:normal} we may identify the inclusion $\OO_{X}
  \to U\ts \sL$  with the map
  $q_*(p^*(\beta))$, where $\beta$ fits in the Euler exact sequence on
  $\p(U)$, restricted to $Y$:
  \begin{equation}
    \label{eulerU}
    0 \to \OO_{Y} \xr{\beta} U \ts \OO_{Y} (1) \to (\cT_{\p(U)})_{|Y} \to 0.    
  \end{equation}

  Therefore the image of $\alpha$ is isomorphic to
  $q_*(p^*((\cT_{\p(U)})_{|Y}))$ and we deduce, for all $i$:
  \begin{equation}
    \label{alpha-T}
    \HH^{i}(X,\im(\alpha)) \cong \HH^{i}(Y,(\cT_{\p(U)})_{|Y}).    
  \end{equation}

  We use now the exact sequence defining $Y$ in $\p(U)$, tensored with
  ${\cT_{\p(U)}}_{|Y}$:
  \[
  0 \to \cT_{\p(U)}(-k) \to \cT_{\p(U)} \to (\cT_{\p(U)})_Y \to 0.
  \]
  Note that $\hh^0(\p(U),\cT_{\p(U)})=m^2-1$, while
  $\HH^i(\p(U),\cT_{\p(U)})=0$ for all $0<i<m-1$.
  By Bott's theorem, we have, for such $i$:
  \[
  \hh^i(\p(U),\cT_{\p(U)}(-k))=
  \left\{
  \begin{array}{ll}
    1 & \mbox{if $i=m-2$ and $k=m$}, \\
    0 & \mbox{otherwise}.
  \end{array} \right.
  \]
  
  This implies \eqref{imalpha0} (indeed $m\geq 3$ and $k\geq 4$).
  We also get \eqref{imalpha1} for $m\geq 5$ and for $m=4,k\neq
  4$, while $(m,k)=(4,4)$ gives \eqref{speciale2}.
  Likewise we get \eqref{imalpha2} for $m\geq 4$, except $(m,k) =
  (5,5)$, in which case we get \eqref{speciale}.
  
  To conclude the proof, we look at the case $m=4$. We note that by Serre duality:
  \begin{equation}
    \label{alpha-Omega}
    \HH^{2}(X,\im(\alpha)) = \HH^0(Y,\Omega_{\p(U)|Y} \ts \omega_Y)^*.
  \end{equation}
  It is easy to see that
  $\hh^0(Y,\Omega_{\p(U)|Y} \ts \omega_Y)$ takes value
  $(k-2)(k-3)(k-5)/2$ if $k\geq 5$, or
  zero if $k\leq 4$.
  This finishes the proof.
\end{proof}

Set the notation as in Lemma
\ref{lemmone}. We have then:
\begin{lemma} \label{projection}
  Let $m\geq 3$ and $k\geq m-1$ be integers, and fix the setup as above.
  Then, for all integers $i\geq 0$, there is a natural isomorphism:
  \begin{equation}
    \label{delta-Omega}
    \HH^{i}(X,\Omega_{\p(V)}(2) \ts \sL) \cong \HH^{i}(Y,\im(\delta)).
  \end{equation}
\end{lemma}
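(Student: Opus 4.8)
The argument runs parallel to the proof of Lemma \ref{alpha}: push the relevant sheaf down to $Y$ along the scroll map $p$. Since $X$ is smooth we have $\rD_{m-2}(\phi)=\emptyset$, so by Lemma \ref{iso} the map $q\colon\ppp(\cE)\to X$ is an isomorphism; I identify $X$ with $\ppp(\cE)=\mathrm{Proj}(\Sym\cE)$ from now on, recalling that $\OO_X(H)$ is the tautological bundle with $p_*\OO_X(H)\cong\cE$, and that $\sL\cong p^*\OO_Y(1)$ by Lemma \ref{O1} (whose hypotheses, including $k\ge4$ when $m=3$, are in force). Restricting the dual twisted Euler sequence
\[
0\to\Omega_{\p(V)}(2)\to V^{*}\ts\OO_{\p(V)}(1)\to\OO_{\p(V)}(2)\to0
\]
to $X$ and tensoring with $\sL$ gives a short exact sequence
\[
0\to\Omega_{\p(V)}(2)\ts\sL\to V^{*}\ts\OO_X(H)\ts\sL\to\OO_X(2H)\ts\sL\to0.
\]

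Next I would apply $p_*$ to this sequence. Because $\cE$ is globally generated with $\HH^0(Y,\cE)\cong V^{*}$ (see \eqref{GVE}) and $X=\ppp(\cE)$, each fibre of $p$ is, via $q$, a linearly embedded projective space in $\p(V)$ on which $\OO_X(H)$ restricts to $\OO(1)$; hence, exactly as in the proof of Lemma \ref{alpha}, $R^{i}p_*\OO_X(jH)=0$ for $i\ge1$ while $p_*\OO_X(jH)$ is the corresponding term of \eqref{symm} (namely $\OO_Y$, $\cE$, $\Sym^{2}\cE$) for $j=0,1,2$. Using the projection formula with the line bundle $p^{*}\OO_Y(1)$, the middle and right terms of the displayed sequence push down to $V^{*}\ts\cE(1)$ and $\Sym^{2}\cE(1)$ with vanishing higher direct images, so the long exact sequence of the $R^{\bullet}p_*$ collapses to
\[
0\to p_*\bigl(\Omega_{\p(V)}(2)\ts\sL\bigr)\to V^{*}\ts\cE(1)\xrightarrow{\ \eta'\ }\Sym^{2}\cE(1)\to R^{1}p_*\bigl(\Omega_{\p(V)}(2)\ts\sL\bigr)\to0,
\]
with $R^{i}p_*\bigl(\Omega_{\p(V)}(2)\ts\sL\bigr)=0$ for all $i\ge2$. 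The map $V^{*}\ts\OO_X(H)\to\OO_X(2H)$ is multiplication of sections of $\OO_X(H)$ by the linear forms of $V^{*}$, so after pushing down $\eta'$ is the evaluation-and-multiplication map $V^{*}\ts\cE(1)\to(\cE\ts\cE)(1)\to\Sym^{2}\cE(1)$, which is precisely the map $\eta$ of the symmetric-square sequence \eqref{symm}. Since \eqref{symm} exhibits $\eta$ as surjective with $\ker\eta=\im(\delta)$, I conclude $R^{1}p_*\bigl(\Omega_{\p(V)}(2)\ts\sL\bigr)=0$ and $p_*\bigl(\Omega_{\p(V)}(2)\ts\sL\bigr)\cong\im(\delta)$.

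Finally, since all higher direct images of $\Omega_{\p(V)}(2)\ts\sL$ under $p$ vanish, the Leray spectral sequence for $p$ degenerates and yields $\HH^{i}(X,\Omega_{\p(V)}(2)\ts\sL)\cong\HH^{i}\bigl(Y,p_*(\Omega_{\p(V)}(2)\ts\sL)\bigr)\cong\HH^{i}(Y,\im(\delta))$ for every $i\ge0$, which is the claim. The step I expect to be delicate is the direct-image computation $R^{\bullet}p_*\OO_X(jH)$ when $\cE$ fails to be locally free (which can occur for $m\ge5$, where $\dim Y\ge3$): then $p$ is not flat and one cannot argue by base change alone, so one must exploit that each fibre of $p$ is an honest projective space with $\OO_X(H)$ restricting to $\OO(1)$ in order to obtain both the vanishings $R^{>0}p_*\OO_X(jH)=0$ and the identifications of $p_*\OO_X(jH)$ with the terms of \eqref{symm}; verifying that the pushed-down multiplication map coincides with $\eta$ is the other point requiring care.
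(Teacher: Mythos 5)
Your proof is correct and follows essentially the same route as the paper: restrict the twisted dual Euler sequence to $X\cong\ppp(\cE)$, tensor with $\sL\cong p^{*}\OO_{Y}(1)$, push down via $p_{*}$ using $p_{*}q^{*}(\OO_{\p(V)}(t)\ts\sL^{\ell})\cong\Sym^{t}\cE(\ell)$ to identify the pushed-down map with $\eta$ from \eqref{symm}, and conclude $\HH^{i}(X,\Omega_{\p(V)}(2)\ts\sL)\cong\HH^{i}(Y,\ker(\eta))\cong\HH^{i}(Y,\im(\delta))$. The extra care you take with the vanishing of higher direct images and the Leray degeneration is left implicit in the paper but is the same argument.
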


\begin{proof}
  We first write the dual Euler sequence on $\p(V)$, twisted by $\sL
  \ts \OO_{\p(V)}(2)$:
  \[
  0 \to \Omega_{\p(V)}(2) \ts \sL \to V^* \ts \OO_{\p(V)}(1) \ts \sL \xr{\xi} \OO_{\p(V)}(2) \ts \sL \to 0.
  \]
  
  Recall now the natural embedding $q:\ppp(\cE)\to \p(V)$ and the
  canonical projection $p : \ppp(\cE) \to Y$.
  For all $t\geq 0$ and all $\ell$, we have the natural isomorphism:
  \[
  p_{*}q^{*}(\OO_{\p(V)}(t)\ts \sL^{\ell}) \cong \Sym^{t} \cE (\ell).
  \]
  
  Moreover, applying the functor $p_{*}q^{*}$ to the map $\xi$, we find the
  natural projection:
  \[
  \eta :V^{*} \ts \cE(1) \to \Sym^{2} \cE (1),
  \]
  appearing in the exact sequence \eqref{symm}.
  Therefore we have, for all integers $i$:
  \[
  \HH^{i}(X,\Omega_{\p(V)}(2) \ts \sL) \cong \HH^{i}(X,\ker(\xi)) \cong \HH^{i}(Y,\ker(\eta)) \cong \HH^{i}(Y,\im(\delta)).
  \]
\end{proof}

We prove now the main result of this section.
\begin{proof}[Proof of Theorem \ref{thm:normal}]
By the above claim, taking cohomology of \eqref{short2}, we get the long exact sequence:
    \begin{equation}
      \label{lunga}
      \cdots \to \HH^{i}(X,\im(\alpha)) \to \HH^{i}(Y,\im(\delta)) \to \HH^{i}(X,\cN)
      \to \HH^{i+1}(X,\im(\alpha)) \to \cdots
    \end{equation}
  
  Having all this set up, we use Lemma \ref{alpha} and we apply Lemma
  \ref{lemmone} to obtain, for all
  $(m,k) \neq (4,4)$:
  \[
  \hh^{0}(X,\cN)=m k(2k-1)-1-(m^{2}-1)=m(k(2k-1)-m).
  \]
  So \eqref{normal_0} is proved except for $(m,k) = (4,4)$.
  Likewise we get \eqref{normal_1_0} for $m \neq 4,5$ and for
  $m=4$, $k \leq 3$ as well as for $m=5$, $k \neq 5$.
  We also have \eqref{normal_1_d} for $m=4$ and $k\geq 5$, in view of:
  \begin{align*}
  \hh^{1}(X,\cN) & = \hh^{1}(X,\im(\delta))+ \hh^{2}(X,\im(\alpha)) = \\
  & = \frac{(k-1)(k-2)(k-3)}{6} + \frac{(k-2)(k-3)(k-5)}{2} = \\
  & = \frac{2(k-2)(k-3)(k-4)}{3}.
  \end{align*}
  
  It remains to take care of the cases $(m,k) \in \{ (4,4), (5,5) \}$ and to treat the
  case $m=3$.
  Let us accomplish the first task. We will need the following:

  \begin{my-claim}
    Assume $m\geq 4$ and consider the exact sequences \eqref{Fbar} and \eqref{eulerU}.
    Then the inclusion $\im(\alpha) \to 
    \Omega_{\p(V)}(2) \ts \sL$ given by \eqref{es:normal} can be
    identified with $q_{*}(p^{*}(\zeta))$, where the map $\zeta$
    fits in the following exact commutative diagram:
    \begin{equation} \label{4-4}
    \xymatrix{0 \ar[r] & \OO_{Y} \ar@{=}[d] \ar[r] & U \ts \OO_{Y}(1)
      \ar[d] \ar[r] & 
      (\cT_{\p(U)})_{|Y} \ar[r] \ar^-{\zeta}[d] &  0 \\
      0 \ar[r] & \OO_{Y} \ar[r] & \cF(1) \ar[r] & \im(\delta) \ar[r]  & 0 \\
    }
    \end{equation}
  \end{my-claim}
  \begin{proof}[Proof of the claim]
    First recall the natural isomorphisms
    $\Omega_{\p(V)}(2)\ts \sL \cong q_{*}(p^{*}(\im(\delta)))$ (see the
    proof of Lemma \ref{projection}) and 
    $\im(\alpha) \cong q_{*}(p^{*}((\cT_{\p(U)})_{|Y}))$ (see Claim
    \ref{alpha}).
    Then the map $\im(\alpha) \to 
    \Omega_{\p(V)}(2) \ts \sL$ is induced by some nonzero map $\zeta : 
    (\cT_{\p(U)})_{|Y} \to \im(\delta) $.
    Composing the natural surjection $U \ts \OO_{Y}(1) \to
    (\cT_{\p(U)})_{|Y}$ with $\zeta$, we get a nonzero map
    $U \ts \OO_{Y}(1) \to \im(\delta)$.
    Now we can lift this map to a nonzero map $U \ts \OO_{Y}(1)\to
    \cF(1)$, indeed the group $\Ext^{1}(U \ts \OO_{Y}(1),\OO_{Y})$
    vanishes in the range $m\geq 4$.
    We have thus an induced map $\OO_{Y} \to \OO_{Y}$, which must be a nonzero
    multiple of the identity. This gives the diagram \eqref{4-4}.
  \end{proof}

  Let us now prove Theorem \ref{thm:normal} in case
  $(m,k)=(4,4)$.
  We look at the map $\HH^{1}(X,\im(\alpha)) \to
  \HH^{1}(X,\Omega_{\p(V)}(2) \ts \sL)$ induced by the inclusion $\im(\alpha) \to
  \Omega_{\p(V)}(2) \ts \sL$.
  Under the isomorphism \eqref{delta-Omega} and \eqref{alpha-Omega},
  this map is induced by the map $\zeta$ introduced in the previous claim.
  Thus it suffices to show that $\zeta$ induces an isomorphism of
  $\HH^{1}(Y,(\cT_{\p(U)})_{|Y})$ to 
  $\HH^{1}(Y,\im(\delta))$.
  In order to show this,
  we look at the cohomology of the diagram \eqref{4-4} provided by the above
  claim.
  We get a commutative diagram of the form:
    \[
      \xymatrix{
        \HH^{1}(Y,(\cT_{\p(U)})_{|Y}) \ar[d] \ar[r] & \HH^{2}(Y,\OO_{Y}) \ar@{=}[d]  \\
        \HH^{1}(Y,\im(\delta)) \ar[r] & \HH^{2}(Y,\OO_{Y}).
    }
    \]
    Note that both horizontal arrows are isomorphisms, and the rightmost
    vertical arrow is an isomorphism too.
    Therefore the map $\HH^{1}(Y,(\cT_{\p(U)})_{|Y}) \to
    \HH^{1}(Y,\im(\delta))$
    induced by $\zeta$ is an
    isomorphism, and we are done. Similarly, in the case $(m,k)=(5,5)$, we get a diagram of the form  \[
      \xymatrix{
        \HH^{2}(Y,(\cT_{\p(U)})_{|Y}) \ar[d] \ar[r] & \HH^{3}(Y,\OO_{Y}) \ar@{=}[d]  \\
        \HH^{2}(Y,\im(\delta)) \ar[r] & \HH^{3}(Y,\OO_{Y}).
    }
    \] and as before we can conclude that  the map $\HH^{2}(Y,(\cT_{\p(U)})_{|Y}) \to
    \HH^{2}(Y,\im(\delta))$  induced by $\zeta$ is an  isomorphism.

    \medskip To complete the proof of Theorem \ref{thm:normal}, it
    only remains to check the case $m=3$. 
    Let us compute $\hh^{0}(X,\cN)$ in this case.
    Using Lemma \ref{projection},
    in view of the exact sequences \eqref{Fbar} and \eqref{Fcal}, we calculate:
    \begin{align}
      \label{delta-curve}
      \hh^{0}(X,\Omega_{\p(V)}(2)\ts \sL) & = \hh^{0}(Y,\im(\delta)) = \\
      \nonumber & = \hh^{0}(Y,\cF(1))+\hh^{1}(Y,\OO_{Y})-1 \\
      \nonumber & = \frac{k(13k-9)}{2},
    \end{align}
    and $\HH^{i}(X,\Omega_{\p(V)}(2)\ts \sL)=0$ for $i \geq 1$.
    Moreover, the isomorphism \eqref{alpha-T} still holds for $m=3$.
    So, we easily get $\hh^{0}(Y,\im(\alpha))=8$ and:
    \begin{equation}
      \label{alpha-curve}
      \hh^{1}(Y,\im(\alpha))=\hh^{1}(Y,(\cT_{\p(U)})_{|Y})=(k-2)(k-4).
    \end{equation}
    Therefore, using \eqref{delta-curve} and \eqref{alpha-curve}, we can calculate $\HH^{i}(X,\cN)$ by \eqref{lunga},
    obtaining:
    \[
    \hh^{0}(X,\cN)=\frac{k(13k-9)}{2}+(k-2)(k-4)-8=\frac{3k(5k-7)}{2},
    \]
    and $\HH^{i}(X,\cN)=0$ for $i\geq 1$. This completes the proof of
    Theorem \ref{thm:normal}.
\end{proof}

Our theorem asserts that the cohomology of the normal bundle of $X$ in
$\p(V)$ behaves {\it as expected}. However, the following problem
remains open.

\begin{problem}
  Let $\phi$ be a general  morphism $\OO_{\p(V)}^4 \to
  \Omega_{\p(V)}(2)$, where $\dim(V)=2k$, and let $\cE = \cE_\phi$ the associated bundle
  on $Y=Y_\phi$.
  Then do we have, for all $k\geq 3$, $\HH^2(Y,\Sym^2 \cE(1-k))=0$? In other words, is
  the moduli space of stable sheaves containing $\cE$ smooth at $[\cE]$?
\end{problem}

Up to the authors' knowledge, the above question has a well-known
answer (in the affirmative sense) only for $k \leq 15$, due to
a result of Schreyer-Beauville \cite{beauville:determinantal}.

\section{Injectivity of the map $\rho$}

The results of this section are intended to achieve the proof of our
main result.
Once we have computed the dimension of the Hilbert
scheme in the previous section, it only remains to check that $\rho$
is generically injective (hence birational onto its image).

The idea to prove this is that, in view of Lemma \ref{O1}, 
two maps $\phi$ and $\phi'$ defining same scroll $X$ will have isomorphic cokernel
sheaves $\cok(\tra \phi) \cong \cok(\tra \phi')$.
We prove thus some suitable cohomology vanishing in order to lift this
isomorphism to the image sheaves $I_\phi$ and $I_{\phi'}$ and then to
the whole tangent bundle $\cT_{\p(V)}(-2)$.
This will show that $\phi$ and $\phi'$ give the same point in
$\G(m,\wedge^{2} V^{*})$.
This will prove the theorem, except in case $m=3$, where the image of 
$\rho$ is not dense in the Hilbert scheme $\cH_3(V)$
containing Palatini scrolls. We will thus conclude by computing the
codimension of the image of $\rho$ in $\cH_3(V)$.

Let us now consider a map $\phi$ as in the hypothesis of Section \ref{hyp}, and set
$K_\phi = \ker(\tra \phi)$, ${I}_\phi = \im(\tra \phi)$, ${\sL}_\phi =
\cok(\tra \phi)$.
\begin{lemma} \label{lemmone2}
  Fix the hypothesis as in Section \ref{hyp}.
  Then we have:
  \[
     \HH^1(\p(V),K_\phi \ts \Omega_{\p(V)}(2))=0, \qquad \mbox{for all $m\geq 3$.}
  \]
\end{lemma}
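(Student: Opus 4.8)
The plan is to work with the kernel bundle $K_\phi = \ker(\tra\phi)$, which is a line bundle on $\p(V)$ (since $\rD_{m-2}(\phi)=\emptyset$ under our hypotheses), and to control $\HH^1(\p(V),K_\phi\ts\Omega_{\p(V)}(2))$ by breaking up the defining sequences of $\phi$. Recall that $\tra\phi:\cT_{\p(V)}(-2)\to U^*\ts\OO_{\p(V)}$ has image $I_\phi$ and that, composing with the Euler sequence, the same image is the image of $F_\phi:V\ts\OO_{\p(V)}(-1)\to U^*\ts\OO_{\p(V)}$. First I would identify $K_\phi$ explicitly: from the sequence $0\to K_\phi\to\cT_{\p(V)}(-2)\to I_\phi\to 0$ and a Chern class computation (using $c_1(\cT_{\p(V)}(-2))$, $c_1(I_\phi)$ as read off from $\tra\phi$ and the sequence $0\to\cI_X\to I_\phi\to\OO_X\to\cdots$), one pins down $K_\phi\cong\OO_{\p(V)}(-a)$ for the appropriate integer $a$; I expect $K_\phi\cong\OO_{\p(V)}(1-m)$ or a nearby twist, so the target becomes $\Omega_{\p(V)}(2-a)$ up to the $\OO_X$-supported discrepancy.

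The key structural input is the short exact sequence $0\to K_\phi\to\cT_{\p(V)}(-2)\to I_\phi\to 0$. Tensoring with $\Omega_{\p(V)}(2)$ and taking cohomology gives
\[
\HH^0(\p(V),I_\phi\ts\Omega_{\p(V)}(2))\to \HH^1(\p(V),K_\phi\ts\Omega_{\p(V)}(2))\to \HH^1(\p(V),\cT_{\p(V)}(-2)\ts\Omega_{\p(V)}(2)).
\]
The rightmost group is $\HH^1(\p(V),\cT_{\p(V)}\ts\Omega_{\p(V)})=\HH^1(\p(V),\EEnd(\cT_{\p(V)}))$, which vanishes (it is a standard Bott-type computation on projective space; $\EEnd\cT_{\p(V)}$ has only $\HH^0$). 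So the vanishing will follow once I show the connecting/previous map, i.e. the surjectivity of $\HH^0(\p(V),\cT_{\p(V)}\ts\Omega_{\p(V)})\to\HH^0(\p(V),I_\phi\ts\Omega_{\p(V)}(2))$, or more simply that $\HH^1(\p(V),K_\phi\ts\Omega_{\p(V)}(2))$ injects into a vanishing group. Alternatively, and probably cleaner, I would instead tensor the sequence $0\to I_\phi\to U^*\ts\OO_{\p(V)}\to\sL_\phi\to 0$ and the sequence $0\to\cI_X\to I_\phi\to\OO_X\to 0$ (the leftmost column of diagram \eqref{diagramma} with $\sL$ in the quotient) by $\Omega_{\p(V)}(2)$, reducing the claim to vanishing of $\HH^2(\p(V),\cI_X\ts\Omega_{\p(V)}(2))$ (controlled via $0\to\cI_X\to\OO_{\p(V)}\to\OO_X\to0$ and Bott, plus the cohomology of $\Omega_X(2)\ts(\text{line bundle})$ which we already understand from Section 4 via the push-forward to $Y$) together with $\HH^1$ of $\sL_\phi\ts\Omega_{\p(V)}(2)$; the latter restricts to $X$ and is computed by pushing down to $Y$ exactly as in Lemma \ref{projection} and Lemma \ref{lemmone}.

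The main obstacle I anticipate is the bookkeeping of the twist: getting $c_1(K_\phi)$ right and then checking that the relevant Bott vanishing $\HH^i(\p(V),\Omega_{\p(V)}(j))=0$ actually holds for the specific $j$ that appears (these groups are nonzero exactly when $j=0$, or $i=2k-1$, etc., so one must verify $j$ avoids the bad value), and separately handling the $X$-supported part $\OO_X\ts\Omega_{\p(V)}(2)$ and $\sL_\phi\ts\Omega_{\p(V)}(2)$ by the push-forward-to-$Y$ machinery, where one must be careful that $m\geq 3$ (not $m\geq 4$) is enough — here the hypothesis in the statement is precisely $m\geq 3$, so any step that used $m\geq 4$ in Section 4 (e.g. $\HH^1(Y,\OO_Y)=0$) must be replaced by its $m=3$ counterpart, contributing the extra $\binom{k-1}{2}$ only in degrees where it gets killed. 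I expect that once the twist is identified the Bott computations are routine and the $Y$-side computations are verbatim those of Lemmas \ref{lemmone} and \ref{projection}, so the proof is short modulo this identification.
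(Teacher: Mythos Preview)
Your first paragraph contains a genuine error: $K_\phi=\ker(\tra\phi)$ is \emph{not} a line bundle on $\p(V)$. The map $\tra\phi:\cT_{\p(V)}(-2)\to U^*\ts\OO_{\p(V)}$ goes from a bundle of rank $2k-1$ to one of rank $m$, and since $\sL_\phi$ is supported on $X$ the image $I_\phi$ has generic rank $m$; hence $\rk K_\phi=2k-1-m$, which is $\geq 2$ for all $k\geq 4$ (and equals $1$ only for $(m,k)=(4,3)$). So there is no identification $K_\phi\cong\OO_{\p(V)}(-a)$ and no direct Bott computation of $\HH^1(K_\phi\ts\Omega(2))$; that route is a dead end.

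Your second approach, via the long exact sequence of $0\to K_\phi\to\cT_{\p(V)}(-2)\to I_\phi\to 0$ tensored with $\Omega_{\p(V)}(2)$, is exactly the paper's strategy. But you are missing one essential ingredient: before using $\HH^1(\cT_{\p(V)}\ts\Omega_{\p(V)})=0$, you must also establish $\HH^0(K_\phi\ts\Omega_{\p(V)}(2))=0$. Otherwise the exact sequence only tells you that $\HH^1(K_\phi\ts\Omega(2))$ is a quotient of $\HH^0(I_\phi\ts\Omega(2))$, not that it vanishes. The paper gets this $\HH^0$ vanishing from the injectivity of $f_\phi:V\to U^*\ts V^*$ (hypothesis \eqref{hyp:injective}), which forces $\HH^0(K_\phi(1))=0$ and hence $\HH^0(K_\phi\ts\Omega(2))=0$ via the Euler sequence. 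With this in hand the problem reduces to showing $\hh^0(I_\phi\ts\Omega_{\p(V)}(2))=1$ (matching the $1$-dimensional $\HH^0(\cT\ts\Omega)$), and the paper carries this out for $m\geq 4$ by your suggested route: pass through $J=\ker(U^*\ts\OO_X\to\sL_\phi)$, push down to $Y$ to get $\Omega_{\p(U)|Y}(1)$, and end with $\hh^0(Y,\im(\delta)\ts\Omega_{\p(U)|Y})=1$ via the sequences \eqref{Fcal}, \eqref{Fbar}. For $m=3$ the paper does \emph{not} redo this computation; instead it uses the snake lemma on diagram \eqref{diagramma} to produce $0\to K_4\to K_3\to\cI_{X_4}\to 0$ and deduces the $m=3$ vanishing from the already-proved $m=4$ case together with $\HH^1(\cI_{X_4}\ts\Omega(2))=0$. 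Your proposal to rerun the $Y$-side computation with the $m=3$ correction $\hh^1(\OO_Y)=\binom{k-1}{2}$ might also work, but you would have to check carefully that the extra term really cancels.
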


\begin{proof}
  Recall the exact sequences:
  \begin{align}
    \label{seq:I} & 0 \to {I}_\phi \to U^* \ts \OO_{\p(V)} \to {\sL}_\phi \to 0, \\
    \label{seq:K} & 0 \to K_\phi \to \cT_{\p(V)}(-2) \to {I}_\phi \to 0.
  \end{align}
  We consider the dual Euler sequence on $\p(V)$, twisted by
  $K_{\phi}(2)$, and take global sections. We obtain an inclusion:
  \[
    \HH^0(\p(V),K_{\phi} \ts \Omega_{\p(V)}(2))  \subset  V^* \ts \HH^0(\p(V),K_{\phi}(1)).
  \]
  Note that, taking cohomology of \eqref{seq:K} and \eqref{seq:I} twisted by
  $\OO_{\p(V)}(1)$ we obtain, respectively, a map $V \to \HH^0(\p(V),I_{\phi}(1))$
  and a map $\HH^0(\p(V),I_{\phi}(1)) \to U^* \ts V^*$. The composition
  of these two maps is the linear map $f_\phi$ in  \eqref{fphi}, which is
  injective in our hypothesis. Therefore we have $\HH^0(\p(V),K_{\phi} (1)) = 0$ and we deduce the vanishing:
  \begin{equation}
    \label{H0K}
    \HH^0(\p(V),K_{\phi} \ts \Omega_{\p(V)}(2))  = 0.
  \end{equation}
  
  Tensoring the exact sequence \eqref{seq:K} by $\Omega_{\p(V)}(2)$, since
  $\HH^0(\p(V),\cT_{\p(V)} \ts \Omega_{\p(V)}) \cong \kk$ and
  $\HH^1(\p(V),\cT_{\p(V)} \ts \Omega_{\p(V)}) =0 $, using \eqref{H0K} we get the exact
  sequence:
  \[
  0 \to \kk \to \HH^0(\p(V),I_{\phi} \ts \Omega_{\p(V)}(2)) \to \HH^1(\p(V),K_{\phi} \ts \Omega_{\p(V)}(2)) \to 0.
  \]
  In order to conclude the proof of the lemma, we have thus to prove:
  \begin{equation}
    \label{H1=1}
    \hh^0(\p(V),I_{\phi} \ts \Omega_{\p(V)}(2))=1.
  \end{equation}
  
  The rest of the proof is devoted to show the above equality.
  First note that, tensoring the dual Euler sequence on $\p(V)$ by the
  ideal sheaf $\cI_X(2)$ and taking cohomology, we obtain:
  \begin{equation}
    \label{IXOMEGA}
    \HH^i(\p(V),\cI_X \ts \Omega_{\p(V)}(2))=0,  \qquad  \mbox{for $i=0, 1$}.
  \end{equation}
  Indeed, the vanishing $\HH^0(\p(V),\cI_X (2))=0$
  (see our hypothesis \eqref{degree}) implies
  $\HH^0(\p(V),\cI_X \ts \Omega_{\p(V)}(2))=0$,
  while $\HH^1(\p(V),\cI_X (1))=0$ (see \cite[Proposition
  1]{bazan-mezzetti}) implies $\HH^1(\p(V),\cI_X \ts \Omega_{\p(V)}(2))=0$.

  We will use the equality \eqref{IXOMEGA} in order to prove \eqref{H1=1}.
  We have shown in Lemma \ref{O1} that the sheaf $\sL_\phi$ is the extension
  by zero of the line bundle $q_*(p^*(\OO_Y(1)))$ on $X$.
  So, restricting \eqref{seq:I} to $X$ we obtain an exact commutative
  diagram:
 \begin{equation}
   \label{diagramma-2}
    \xymatrix@-1ex{
      & 0 \ar[d] & 0 \ar[d] \\
       & U^* \ts \cI_{X} \ar^-{\cong}[r] \ar[d] & U^* \ts \cI_X \ar[d]  \\
      0 \ar[r] & I_\phi \ar[d] \ar^-{\tra \phi_{m}}[r] & U^* \ts \OO_{\p(V)} \ar[r] \ar[d] & \sL_\phi \ar[r] \ar^-{\cong}[d] & 0 \\
      0 \ar[r] & J \ar[r] \ar[d] & U^* \ts \OO_{X} \ar[r] \ar[d] & q_*(p^*(\OO_Y(1))) \ar[r]  & 0 \\
      & 0 & 0 
    }  
  \end{equation}
  where the sheaf $J$ is defined as the kernel of the map $U^* \ts
  \OO_{X} \to q_*(p^*(\OO_Y(1)))$, so that it is supported on $X$.
  We note that, applying $p_*q^*$ to the bottom row of the above
  diagram, we obtain the dual twisted Euler exact sequence restricted to $Y$, so that we have an
  identification:
  \begin{equation} \label{dettaglio}
    p_*(q^*(J)) \cong \Omega_{\p(U)|Y}(1).    
  \end{equation}
  Tensoring the leftmost column of \eqref{diagramma-2} by $\Omega_{\p(V)}(2)$ and taking
  cohomology, in view of \eqref{IXOMEGA} we find:
  \[
   \HH^0(\p(V),I_{\phi} \ts \Omega_{\p(V)}(2)) \cong \HH^0(X,J \ts \Omega_{\p(V)}(2)).
  \]
  So our final goal is now to show:
  \[
  \hh^0(X,J \ts \Omega_{\p(V)}(2)) = 1.
  \]
  
  In order to prove the above equality, 
  we tensor the lowest row of diagram \eqref{diagramma-2} by
  $\Omega_{\p(V)}(2)$ and
  we use the dual Euler sequence on $\p(V)$ twisted by
  $\OO_{\p(V)}(2)$.
  We get the following exact commutative diagram :
  \begin{equation}
    \label{diagramma-3}
    \xymatrix@-2.9ex{
      & 0 \ar[d] & 0 \ar[d] & 0 \ar[d]\\
      0 \ar[r] & J \ts \Omega_{\p(V)}(2)\ar[d] \ar[r]  & U^*\ts\Omega_{\p(V)}(2)\ts \OO_X \ar[r] \ar[d] & \sL_\phi \ts \Omega_{\p(V)}(2)\ar[r] \ar[d] & 0 \\
      0 \ar[r] & J \ts V^*\ts \OO_{\p(V)}(1)\ar[d] \ar[r]  & U^* \ts V^*\ts \OO_X(1) \ar[r] \ar[d] & \sL_\phi \ts V^*\ts \OO_{X}(1)\ar[r] \ar[d] & 0 \\
      0 \ar[r] & J\ts \OO_{\p(V)}(2) \ar[r] \ar[d] & U^* \ts \OO_{X}(2) \ar[r] \ar[d] & \sL_\phi\ts  \OO_{X}(2)\ar[r] \ar[d]  & 0 \\
      & 0 & 0 & 0
    }  
  \end{equation}
  Because the sheaf $J\ts\Omega_{\p(V)}(2)$ is supported on $X$,
  we have the natural isomorphisms:
  \begin{align*}
    \HH^0(\p(V),J\ts\Omega_{\p(V)}(2)) & \cong
    \HH^0(X,q^{*}(J \ts\Omega_{\p(V))}(2))) \cong \\
    & \cong \HH^0(Y,p_{*}q^{*}(J \ts \Omega_{\p(V)}(2))).
  \end{align*}
  Recall that our aim is to show that the above vector space is
  one-dimensional.
  To show this, in \eqref{diagramma-3} we pull-back by $q$ and
  push-forward by $p$.
  By the argument of Lemma \ref{projection}, we can write $p_*(q^*(\sL_\phi \ts \Omega_{\p(V)}(2)))$
  as $\im(\delta)$, where $\delta$ is defined by \eqref{symm}.
  Using \eqref{dettaglio}, we can thus write the following exact commutative diagram:
  \[
    \xymatrix@-2.5ex{
      & 0 \ar[d] & 0 \ar[d] & 0 \ar[d]\\
     0 \ar[r] & \Omega_{\p(U)|Y}\ts\im(\delta)\ar[d] \ar[r]  & U^*\ts\im(\delta)(-1) \ar[r] \ar[d] & \im(\delta)\ar[r] \ar^-{}[d] & 0 \\
      0 \ar[r] & \Omega_{\p(U)|Y} \ts V^*\ts \cE(1)\ar[d] \ar[r]  & U^* \ts V^*\ts \cE \ar[r] \ar[d] &  V^*\ts \cE(1)\ar[r] \ar^-{}[d] & 0 \\
      0 \ar[r] & \Omega_{\p(U)|Y}\ts \Sym^2 \cE(1) \ar[r] \ar[d] & U^* \ts  \Sym^2 \cE(1) \ar[r] \ar[d] &  \Sym^2 \cE(1)\ar[r] \ar[d]  & 0 \\
      & 0 & 0 & 0
    }  
  \]

  This gives:
  \[
  p_*(q^*(J \ts \Omega_{\p(V)}(2))) \cong \im(\delta) \ts \Omega_{\p(U)|Y}. 
  \]
  So we obtain the following isomorphism:
  \[
  \HH^0(X,J \ts \Omega_{\p(V)}(2)) \cong
  \HH^0(Y,\im(\delta) \ts \Omega_{\p(U)|Y}).
  \]
  We have thus reduced the problem to show:
  \[
  \hh^0(Y,\im(\delta) \ts \Omega_{\p(U)|Y}) = 1. 
  \]

  We use now Claim \ref{claim1} and we twist \eqref{Fcal} by $\Omega_{\p(U)}$.
  Taking cohomology we obtain $\HH^p(Y,\cF \ts \Omega_{\p(U)}(1))=0$ for
  $p=0,1$ since $\HH^p(Y, \Omega_{\p(U)}(-1)) = 0$ and $\HH^p(Y, \Omega_{\p(U)}(1)) = 0$.
  In view of the vanishing just obtained, tensoring \eqref{Fbar} by $\Omega_{\p(U)}$ and taking
  cohomology, we get, for $m\ge 4$:
  \[
  \hh^0(Y,\im(\delta) \ts \Omega_{\p(U)|Y}) = \hh^1(Y, \Omega_{\p(U)|Y}) = 1.
  \]
In order to complete the proof of the lemma we need to take care of  the case $m=3$. 

Let us use the notation of the proof of Lemma \ref{vanishing}. For every $m$ we write the exact sequence \eqref{seq:K}
  \begin{eqnarray*}
     0 \to K_m \to \cT_{\p(V)}(-2) \to {I}_m \to 0.
  \end{eqnarray*}
The leftmost  exact sequence in the exact diagram \eqref{diagramma}  
 \begin{eqnarray*}
     0 \to \cI_{X_m} \to {I}_m \to {I}_{m-1} \to 0
  \end{eqnarray*}
  induces, using the snake lemma, the following exact sequence
   \begin{eqnarray*}
     0 \to K_m \to K_{m-1} \to {\cI}_{m} \to 0
  \end{eqnarray*}
  which for $m=4$ gives
   \begin{eqnarray}
   \label{K3}
     0 \to K_4 \to K_{3} \to {\cI}_{4} \to 0.
  \end{eqnarray}
  Tensoring  \eqref{K3} with $\Omega_{\p(V)}(2)$ and taking cohomology we see that $\HH^1(\p(V),K_3 \ts \Omega_{\p(V)}(2))=0$
 since we have already proved $\HH^1(\p(V),K_4 \ts \Omega_{\p(V)}(2))=0$
and 
  $\HH^1(\p(V),\cI_X \ts \Omega_{\p(V)}(2))=0$ by \eqref{IXOMEGA}.
\end{proof}

\begin{lemma} \label{lift}
  Let $m,k\geq 3$ be integers, $V$ and $U$ be vector spaces of
  dimension respectively $2k$ and $m$.
  Let $\phi_1,\phi_2: U \ts \OO_{\p(V)} \to \Omega_{\p(V)}(2)$ be two
  morphisms such that $X_1 = X_{\phi_1}$ and $X_2 = X_{\phi_2}$ are
  smooth. Assume that the hypothesis of Section \ref{hyp} are satisfied at least by the
morphism $\phi_2$. If the two sheaves $\sL_1 = \cok(\tra \phi_{1})$ and $\sL_2 =
  \cok(\tra \phi_{2})$ are isomorphic, then there is an invertible
  matrix $\Phi \in \GL(U^*)$ and a nonzero scalar $\lambda$ such that the following diagram commutes:
  \begin{equation} \label{coniugate}
    \xymatrix{
      \cT_{\p(V)}(-2) \ar^-{\tra \phi_1}[r] \ar@{=}^{\lambda \id}[d] &  U^* \ts \OO_{\p(V)} \ar[d]^-{\Phi} \\
      \cT_{\p(V)}(-2) \ar^-{\tra\phi_2}[r] &  U^* \ts \OO_{\p(V)} \\
    }
  \end{equation}
\end{lemma}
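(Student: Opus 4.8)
The plan is to build the two vertical arrows of \eqref{coniugate} by lifting the given isomorphism $g\colon\sL_1\xrightarrow{\sim}\sL_2$ upwards: first along \eqref{seq:I} to an endomorphism of $U^*\ts\OO_{\p(V)}$, and then along \eqref{seq:K} to an endomorphism of $\cT_{\p(V)}(-2)$. Write $\pi_i\colon U^*\ts\OO_{\p(V)}\to\sL_i$ for the surjection of \eqref{seq:I}, $\iota_i\colon I_i\hookrightarrow U^*\ts\OO_{\p(V)}$ for its kernel inclusion, and (abusively) also $\tra\phi_i$ for the induced surjection $\cT_{\p(V)}(-2)\twoheadrightarrow I_i$ of \eqref{seq:K}, so that the horizontal maps of \eqref{coniugate} are $\iota_i\circ\tra\phi_i$. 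Since $\phi_2$ obeys the conditions of Section~\ref{hyp}, we may use the vanishings $\HH^1(\p(V),I_2)=0$ (proved above for all $m\geq3$) and $\HH^1(\p(V),K_2\ts\Omega_{\p(V)}(2))=0$ (Lemma~\ref{lemmone2}).

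First I would lift $g\circ\pi_1\colon U^*\ts\OO_{\p(V)}\to\sL_2$ through $\pi_2$; the obstruction lies in $\Ext^1(U^*\ts\OO_{\p(V)},I_2)\cong U\ts\HH^1(\p(V),I_2)=0$, yielding $\Phi\in\Hom(U^*\ts\OO_{\p(V)},U^*\ts\OO_{\p(V)})=\End(U^*)$ with $\pi_2\circ\Phi=g\circ\pi_1$. Then $\Phi(I_1)\subseteq I_2$, so $\Phi$ restricts to $\psi\colon I_1\to I_2$ with $\iota_2\circ\psi=\Phi\circ\iota_1$, and $(\psi,\Phi,g)$ is a morphism between the two copies of \eqref{seq:I}. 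Next I would lift $\psi\circ\tra\phi_1\colon\cT_{\p(V)}(-2)\to I_2$ through the surjection $\tra\phi_2$; now the obstruction lies in $\Ext^1(\cT_{\p(V)}(-2),K_2)\cong\HH^1(\p(V),K_2\ts\Omega_{\p(V)}(2))=0$, giving $\Lambda\in\Hom(\cT_{\p(V)}(-2),\cT_{\p(V)}(-2))$ with $\tra\phi_2\circ\Lambda=\psi\circ\tra\phi_1$. Because $\HH^0(\p(V),\cT_{\p(V)}\ts\Omega_{\p(V)})\cong\kk$ we have $\Lambda=\lambda\,\id$ for a scalar $\lambda$; composing the last identity with $\iota_2$ and using $\iota_2\circ\psi=\Phi\circ\iota_1$ gives $\Phi\circ(\iota_1\circ\tra\phi_1)=\lambda\,(\iota_2\circ\tra\phi_2)$, which is exactly the commutativity of \eqref{coniugate}.

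It remains to check $\lambda\neq0$ and $\Phi\in\GL(U^*)$. If $\lambda=0$ then $\psi\circ\tra\phi_1=0$; as $\tra\phi_1\colon\cT_{\p(V)}(-2)\to I_1$ is surjective this forces $\psi=0$, hence $\Phi\circ\iota_1=0$, so $\Phi$ factors through $\pi_1$ and lies in $\Hom(\sL_1,U^*\ts\OO_{\p(V)})=0$ (the sheaf $\sL_1\cong\sL_2$ is torsion, supported on a subscheme of positive codimension, while $U^*\ts\OO_{\p(V)}$ is torsion-free); then $g\circ\pi_1=\pi_2\circ\Phi=0$, contradicting that $g$ is an isomorphism and $\pi_1\neq0$. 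Thus $\lambda\neq0$, so $\Lambda$ is an isomorphism and $\psi\circ\tra\phi_1=\tra\phi_2\circ\Lambda$ is surjective onto $I_2$; therefore $\psi\colon I_1\to I_2$ is surjective between sheaves of the same rank $m$, so $\ker\psi$ is a torsion subsheaf of the torsion-free sheaf $I_1\subset U^*\ts\OO_{\p(V)}$ and hence vanishes, i.e. $\psi$ is an isomorphism. Finally the five lemma applied to the morphism $(\psi,\Phi,g)$ of the sequences \eqref{seq:I}, with $\psi$ and $g$ isomorphisms, shows $\Phi$ is an isomorphism, i.e. $\Phi\in\GL(U^*)$.

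All the cohomological input has already been secured in Section~3 and in Lemma~\ref{lemmone2}, so the real work is organizational: performing the two liftings in the right order (cokernel first, then the tangent bundle) and matching up the diagrams. The one genuinely delicate point is deducing invertibility of $\Phi$ (equivalently of $\psi$) and $\lambda\neq0$ using only that $X_1$ is smooth, with no genericity assumed on $\phi_1$; this is why the argument routes through the soft facts that a torsion subsheaf of a torsion-free sheaf vanishes and that $\Hom(\text{torsion},\text{torsion-free})=0$, rather than through a vanishing such as $\HH^0(\p(V),I_1)=0$ which would require the hypotheses of Section~\ref{hyp} for $\phi_1$.
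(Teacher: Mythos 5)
Your proof is correct, and its skeleton coincides with the paper's: both arguments perform the same two liftings, first of the isomorphism $\sL_1\cong\sL_2$ through \eqref{seq:I} using the vanishing $\HH^1(\p(V),I_{\phi_2})=0$, then through \eqref{seq:K} using $\HH^1(\p(V),K_{\phi_2}\ts\Omega_{\p(V)}(2))=0$ from Lemma~\ref{lemmone2}, and both identify the resulting endomorphism of $\cT_{\p(V)}(-2)$ with $\lambda\,\id$ via simplicity of the tangent bundle. Where you genuinely diverge is in establishing that $\Phi$ is invertible and $\lambda\neq 0$. The paper notes that $\ker(\Phi)$ is a direct sum of copies of $\OO_{\p(V)}$ sitting inside $I_{\phi_1}$ and kills it by the vanishing $\HH^0(\p(V),I_{\phi_1})=0$ of Lemma~\ref{vanishing}; as you observe, that vanishing is proved under the hypotheses of Section~\ref{hyp}, which the present lemma only grants for $\phi_2$. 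Your alternative --- deducing $\lambda\neq 0$ from the fact that a map from the torsion sheaf $\sL_1$ to the locally free sheaf $U^*\ts\OO_{\p(V)}$ must vanish, then surjectivity of $\psi$ from the invertibility of $\lambda\,\id$, then injectivity of $\psi$ from a rank count inside a torsion-free sheaf, and finally invertibility of $\Phi$ by the five lemma applied to the morphism $(\psi,\Phi,g)$ of the two sequences \eqref{seq:I} --- uses only that $\sL_1\cong\sL_2$ is a nonzero torsion sheaf, which follows from the hypotheses on $\phi_2$ alone. So your argument is strictly more faithful to the asymmetric hypotheses of the statement, and in effect repairs (or renders unnecessary) the paper's implicit appeal to Lemma~\ref{vanishing} for $\phi_1$; the only cost is a slightly longer bookkeeping of the maps $\psi$, $\Lambda$ and $\Phi$.
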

\begin{proof}
  Consider the composition $\sigma$ of the surjection $U^{*} \ts \OO_{\p(V)} \to
  \sL_{1}$ with the isomorphism $\sL_{1} \to \sL_{2}$.
  In order to lift $\sigma$ to a map $\Phi: U^{*} \ts \OO_{\p(V)} \to U^{*}
  \ts \OO_{\p(V)}$, we have to check the vanishing of the group:
  \[
  \Ext^{1}_{\p(V)}(\OO_{\p(V)},I_{\phi_{2}}) \cong \HH^{1}(\p(V),I_{\phi_{2}}).
  \]  
  Since this is guaranteed by Lemma \ref{vanishing}, we get the map
  $\Phi$.
  The kernel of $\Phi$ is isomorphic to a direct sum of copies of
  $\OO_{\p(V)}$. Moreover, since the map $\sL_{1} \to \sL_{2}$ is an
  isomorphism, $\ker(\Phi)$ fits as a sub-sheaf of $I_{\phi_{1}}$, and
  thus provides a nonzero element in
  $\Hom_{\p(V)}(\OO_{\p(V)},I_{\phi_{1}}) \cong
  \HH^{0}(\p(V),I_{\phi_{1}})$. But this group is zero by Lemma
  \ref{vanishing}, so $\Phi$ is an isomorphism.

  So we have an induced isomorphism $I_{\phi_{1}} \to I_{\phi_{2}}$ and we
  consider the composition $\sigma'$ of the projection $\cT_{\p(V)}(-2) \to I_{\phi_{1}}$ with this isomorphism.
  We would like to lift $\sigma'$ to an automorphism $\Phi'$ of
  $\cT_{\p(V)}(-2)$.
  So we have to check the vanishing of:
  \[
  \Ext^{1}_{\p(V)}(\cT_{\p(V)}(-2),K_{\phi_{2}}) \cong
  \HH^{1}(\p(V),K_{\phi_{2}} \ts \Omega_{\p(V)}(2)).
  \]
   This is provided by Lemma \ref{lemmone2}, so we have 
  $\Phi'$. Clearly the map $\Phi'$ is not zero, hence it is a (nonzero) multiple
  of the identity since $\cT_{\p(V)}$ is a simple sheaf.
\end{proof}
Up to
multiplying $\tra \phi_2$ by a nonzero scalar, we may assume that $\Phi'$
is in fact the identity.

\begin{proof}[Proof of the main theorem]
  Let us show that the map $\rho : \G(m,\wedge^{2} V^{*}) \to \cH_m(V)$ is
  generically injective.
  By contradiction, we take two maps $\phi_{1}$ and $\phi_{2}$
  satisfying the hypothesis of Section \ref{hyp}, and such that
  $X_{\phi_{1}}=X_{\phi_{2}}=X$.
  Then the scroll $X$ is smooth, and the maps $\phi_{1}$ and
  $\phi_{2}$ define two line bundles on $X$. Namely, the cokernel
  sheaves $\sL_{i} = \cok(\tra \phi_{i})$ satisfy $\sL_{i}\cong
  q_{*}(p^{*}(\sM_{i}))$ for some line bundles $\sM_{i}$ on the base
  $Y$ of the scroll $X$.
  But by Lemma \ref{O1}, both line bundles $\sM_{1}$ and $\sM_{2}$ are
  isomorphic to $\OO_{Y}(1)$. So we have an isomorphism $\sL_{1} \cong
  \sL_{2}$, and we can thus apply Lemma \ref{lift}.
  In view of diagram \eqref{coniugate}, the maps $\tra\phi_{1}$ and
  $\tra\phi_{2}$ are thus conjugate by an isomorphism $\Phi \in
  \GL(m,\kk)$. Therefore,  the maps $\phi_{1}$ and
  $\phi_{2}$ correspond to the same point in the Grassmann variety
  $\G(m,\wedge^{2} V^{*})$. This proves that $\rho$ is generically injective.

  To complete the proof for $m\geq 4$, recall that we have computed the dimension of $\HH^{0}(X,\cN)$ in Theorem
  \ref{thm:normal}. For $m\geq 4$, this equals the dimension of
  $\G(m,\wedge^{2} V^{*})$. This implies that the map $\rho$ is
  dominant onto its image, and in fact a birational map of
  $\G(m,\wedge^{2} V^{*})$ onto $\cH_m(V)$ for $m\geq 4$.

  Let us now discuss the case $m=3$. We have proved that $\rho$ is birational onto its image.
  Thus an open dense subset of the image of $\rho$ is
  immersed in the subscheme $\cP \subset \cH_3(V)$ whose general element consists of a
  general smooth plane curve $Y$ of degree $k$ equipped with a general
  stable rank-$2$ vector
  bundle $\cE$, with $c_1(\cE)=K_Y+2 H_Y$ (see \cite{beauville:determinantal}).

  In order to finish the proof, we check now that $\cP$ is a subscheme of $\cH_3(V)$ whose dimension equals
  $3(k(2k-1)-3)=3(2k^{2}-k-3)=\dim(\G(3,\wedge^{2} V^{*}))$.
  To do this, let $Y$ be a general projective curve of genus $g={k-1 \choose{2}}$, equipped
  with a general stable rank-$2$ bundle of degree $k(k-1)$ defined on $Y$.
  Then $\ppp(\cE)$ lies in $\cH_3(V)$ (see for instance \cite{calabri-ciliberto-flamini-miranda:non-special}).
  The condition that the curve is planar is of codimension
  $k^{2}-6k+8$ in $\cH_3(V)$.
  Moreover, the rank-$2$ bundle $\cE(-H_{Y})$ should
  have determinant equal to $K_{Y}$, that is it should lie on the
  fiber of $K_{Y}$ under the 
  determinant map $\det : \sM_{Y}(2,2g-2) \to \Pic(Y)$, where
  $\sM_{Y}(2,2g-2)$ denotes the moduli space of stable rank-$2$ bundles
  of degree $2g-2$ on $Y$.
  This is a condition of codimension $\dim(\Pic(Y))={k-1 \choose{2}}$, so $\cP$ has
  codimension $3/2k^{2}-15/2k+9$ in $\cH_3(V)$.
  Since $\dim(\cH_3(V)) = 3k(5k-7)/2$ by Theorem \ref{thm:normal}, this implies that
  $\cP$ has dimension $3(2k^{2}-k-3)$. 
  This completes the proof.
\end{proof}

Our theorem asserts that $\rho$ is generically injective onto the set of
Palatini scrolls. 
However, one could consider the following related question.
Namely, we consider the pfaffian map which associates to $\phi
\in W = U^* \ts \wedge^2 V^*$ the pfaffian of $M_\phi$ as an element of
$|\OO_{\p(U)}(k)|$.
This factors through a map $\overline{\Pf}$ defined on the quotient of
$W$ by $\GL(V)$ acting by congruence. Beauville proved in
\cite{beauville:determinantal} that $\overline{\Pf}$ is generically
injective for $m=6$, $k=3$.

\begin{problem}
Is the map $\overline{\Pf}$ generically injective for $m\geq 7$,
$k\geq 3$? For $m=6$, $k\geq
4$? For $m=5$, $k\geq 6$? For $m=4,k\geq 16$?
\end{problem}

\begin{acknowledgements}{We would like to thank Luca Chiantini, Ciro Ciliberto,  Flaminio Flamini and
  Emilia Mezzetti for many interesting discussions and for calling our
  attention to 
  some useful references.
We would also like to thank  the referee for helpful comments  and for pointing  out a mistake in our first version of the paper.}
\end{acknowledgements}


\end{document}